\documentclass[11pt, reqno]{amsart}
\usepackage{amssymb}
\usepackage{amsfonts}
\usepackage{amsmath}
\usepackage{graphicx}
\usepackage{xcolor}
\usepackage{mathrsfs}
\usepackage{stmaryrd}
\usepackage{epsfig,color}
\usepackage{blindtext}
\usepackage{enumerate}
\usepackage{hyperref}
\usepackage{url}
\usepackage{bbm}
\usepackage{nicefrac,mathtools}
\usepackage{bm}  
\usepackage[alphabetic,msc-links,lite]{amsrefs}

\DeclareGraphicsExtensions{.pdf,.jpeg,.png}
\usepackage{epstopdf}
\usepackage{cancel} 
\usepackage[normalem]{ulem} 
\usepackage{verbatim} 
\usepackage{enumitem} 
\usepackage{tikz-cd}
\usetikzlibrary{cd}
\usepackage{color}
\usepackage{geometry}
\usepackage{tikz}
\usetikzlibrary{decorations.markings}
\usetikzlibrary{arrows.meta}

\usepackage{extarrows}
\newtheorem{theorem}{Theorem}[section]

\newtheorem{proposition}[theorem]{Proposition}
\newtheorem{lemma}[theorem]{Lemma}
\newtheorem{corollary}[theorem]{Corollary}
\newtheorem{question}[theorem]{Question}

\theoremstyle{definition}

\newtheorem{remark}[theorem]{Remark}

\newcommand\QQ{\mathbb{Q}}
\newcommand\CC{\mathbb{C}}
\newcommand\PP{\mathbb{P}}

\newcommand\ZZ{\mathbb{Z}}

\newcommand\FF{\mathbb{F}}
\newcommand\OO{\mathcal{O}}

\DeclareMathOperator{\et}{\acute{e}t}

\DeclareMathOperator{\Spec}{Spec}

\DeclareMathOperator{\End}{End}

\DeclareMathOperator{\Bl}{Bl}

\title{Non-liftable varieties via \'etale cohomology rings}

\author{Runjie Hu}
\address{Department of Mathematics, Texas A\&M University,
College Station, TX 77843, USA}
\email{runjie.hu@tamu.edu}

\author{Siqing Zhang}
\address{Department of Mathematics, Yale University,
New Haven, CT 06511, USA}
\email{siqing.zhang.math@gmail.com}

\date{}
\begin{document}

\begin{abstract}
    We construct a smooth projective variety in positive characteristic whose $\mathbb{Q}_{\ell}$-coefficient \'etale cohomology ring is not the scalar extension of any graded $\QQ$-algebra, providing an example of a new type of obstruction to characteristic zero liftability.
\end{abstract}

\maketitle

\section*{Introduction}

Let $p>0$ be a prime number. Let $\ell\neq p$ be another prime number.
A graded $\QQ_\ell$-algebra $R$ is said to admit a $\QQ$-form if there
exists a graded $\QQ$-algebra $R_{\QQ}$ and a graded-algebra isomorphism $R_{\QQ}\otimes_{\QQ}\QQ_\ell\cong R$.
The main result of this paper is the following:

\begin{theorem}\label{thm:main-intro}
    There exists a simply-connected smooth projective variety $X$ over $\overline{\FF}_p$ whose \'etale cohomology ring $H^*(X,\QQ_{\ell})$ admits no $\QQ$-form.
In particular, the variety $X$ does not admit a lift to characteristic $0$.
\end{theorem}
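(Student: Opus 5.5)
The plan is to build $X$ so that its cohomology ring contains, intrinsically, a configuration of lines in a $2$-dimensional $\QQ_\ell$-vector space. The isomorphism class of such a configuration carries a modulus, and a $\QQ$-form forces this modulus to lie in $\QQ$. All cycle classes are algebraic, so this modulus will be an algebraic number. The aim is to make it an algebraic number in $\QQ_\ell\setminus\QQ$. The second assertion of Theorem~\ref{thm:main-intro} then follows from the first: for a smooth projective lift $\mathcal X$ over a mixed-characteristic base, smooth and proper base change together with the comparison theorem identify $H^*(X,\QQ_\ell)$ with $H^*(\mathcal X_{\CC},\QQ)\otimes\QQ_\ell$ as graded rings, and that is a $\QQ$-form.

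\textbf{Construction.} Take a supersingular elliptic curve $E$ over $\overline{\FF}_p$. Its endomorphism algebra $B$ is the quaternion algebra over $\QQ$ ramified at $p$ and $\infty$, and $B\otimes\QQ_\ell\cong M_2(\QQ_\ell)$ acts on $V=H^1(E,\QQ_\ell)\cong\QQ_\ell^2$. Pick imaginary quadratic subfields $K_1,K_2\subset B$, both split at $\ell$, and generators $\alpha_i\in K_i$. Each $\alpha_i$ has two eigenlines in $V$, so we get four points of $\PP(V)(\QQ_\ell)$. Let $j$ be the $S_4$-invariant of these four points, a rational function of their cross-ratio $\lambda$. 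Then $j$ lies in the compositum of $K_1$ and $K_2$. We choose $K_1,K_2,\alpha_1,\alpha_2$ so that $j\notin\QQ$; this is a concrete quaternion computation. In $E\times E$ the graphs $\Gamma_{\alpha_i}$ have classes whose $H^1\otimes H^1$-components are the endomorphisms $\alpha_i$. Next blow up $E\times E$, or a product $E^n$ if more room is needed, along suitable curves and points so that the new exceptional classes rigidify the ring. We then pass to a simply connected variety. One option is a further blow-up of a projective space along an embedded copy of this variety, which keeps the relevant ring data in a shifted degree. Another is to replace $E$-factors by simply connected varieties with prescribed $H^2$, such as Kummer-type surfaces, which still carry $V$ inside $H^2$.

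\textbf{Ring-theoretic recovery.} The next step is to show that the configuration can be read off from the graded algebra $R=H^*(X,\QQ_\ell)$ alone, up to $GL(V)$ and the relevant automorphisms. Concretely, I would characterise canonical subspaces of $R^2$ by cup-product conditions, for example annihilators, or elements whose square is zero, or elements spanning products with prescribed rank. From these subspaces we recover $V\otimes V'$ together with the two-dimensional span of the classes $\alpha_1,\alpha_2$, and hence the unordered eigenline data. If a $\QQ$-form $R_\QQ$ exists, the same intrinsic construction applied to $R_\QQ$ produces a $\QQ$-form of the configuration. The four lines are then defined over $\overline{\QQ}$ and permuted by $\mathrm{Gal}(\overline{\QQ}/\QQ)$, so the symmetric invariant $j$ must be rational, which is a contradiction.

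\textbf{Main obstacle.} The hard step is the intrinsic-recovery statement. It requires showing that every graded-algebra automorphism, or more precisely every abstract isomorphism $R_\QQ\otimes\QQ_\ell\cong R$, preserves the chosen subspaces, so that the extracted invariant really is an isomorphism invariant of the ring. I would attack it by computing the ring explicitly via the blow-up formula. Then I would choose the blow-up centres asymmetrically, with distinct numerical intersection data, so that the cup-product cubic or quartic forms single out each piece, reducing the invariance to linear algebra over $\QQ_\ell$.
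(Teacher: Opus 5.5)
Your argument fails at the decisive step. No choice of $K_1,K_2,\alpha_1,\alpha_2$ gives $j\notin\QQ$, because $j\in\QQ$ always.

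Write $\alpha_i^0=\alpha_i-\tfrac12\operatorname{trd}(\alpha_i)$ for the pure parts. Then $(\alpha_i^0)^2=d_i\in\QQ^\times$ acts on $V$ as a scalar, and $\operatorname{tr}(\alpha_1^0\alpha_2^0\mid V)=\operatorname{trd}(\alpha_1^0\alpha_2^0)\in\QQ$. Move the eigenlines of $\alpha_1$ to $\infty,0$. A short computation then gives, for a suitable ordering, the cross-ratio $\lambda=(c-1)/(c+1)$ with $c=\operatorname{trd}(\alpha_1^0\alpha_2^0)/(2\sqrt{d_1d_2})$. So $\lambda$ lies in $\QQ(\sqrt{d_1d_2})$, which is consistent with your compositum remark, and $c^2\in\QQ$. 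Swapping the two eigenlines of $\alpha_1$ (or of $\alpha_2$) sends $c\mapsto -c$ and $\lambda\mapsto 1/\lambda$, and this fixes $j$. Hence $j$ is a rational function of $c^2$ with rational coefficients, so $j\in\QQ$.

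Conceptually, the Galois argument you keep for the contradiction already holds unconditionally. Fix $\mathcal D\otimes_\QQ\overline{\QQ}\cong M_2(\overline{\QQ})$. The configuration is then defined over $\overline{\QQ}$, and by Skolem--Noether each Galois conjugate is projectively equivalent to a relabelling of it. More generally, every polynomial invariant of the pair $(\alpha_1^*,\alpha_2^*)$ under simultaneous conjugation is a polynomial in traces of words in them. These are reduced traces in $\mathcal D$, hence rational. So the data you extract has field of moduli $\QQ$, and no ``modulus in $\QQ_\ell\setminus\QQ$'' exists for this construction. Passing to unordered eigenlines, or to the span of $\alpha_1,\alpha_2$ in $V\otimes V'$, only throws away more information.

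The missing idea is that the pair has no model over $\QQ$ even though all its invariants are rational. A $\QQ$-model would be a unital map $\mathcal D^{\mathrm{op}}\to\End_\QQ(V_\QQ)$ with $\dim_\QQ V_\QQ=2$. That is impossible, since every nonzero $\mathcal D^{\mathrm{op}}$-module has $\QQ$-dimension divisible by $4$. To use this, you must recover the linear maps themselves, with all their $\QQ$-algebra relations, rather than projective eigenline data.

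The paper does this in $H^1$. It takes $Z=\Bl_C\bigl((E\times E)\times\PP^1\bigr)$, blowing up $\Gamma_\gamma\times\{t\}$ for $t$ in sets $T_\gamma$ of pairwise distinct sizes $m_\gamma$. The blocks $B_\gamma$ are then the irreducible components of the rank-drop locus of $\overline x\mapsto\mu_{\overline x}$, and they are labelled by $\dim B_\gamma=m_\gamma$. Their kernels $K_\gamma\subset H^1(Z)=V\oplus V$ are the two axes, the antidiagonal, and the graphs of $-\alpha^*$ and $-\beta^*$. A $\QQ$-form yields subspaces $K_{\gamma,\QQ}$, hence maps $T_{\gamma,\QQ}=g_{1,\QQ}^{-1}\circ g_{\gamma,\QQ}$ with $T_{\gamma,\QQ}\otimes\QQ_\ell=\gamma^*$, and so the impossible $\mathcal D^{\mathrm{op}}$-module structure on a $2$-dimensional $\QQ$-space.

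Your other steps also need to be made concrete. These are the unspecified ``rigidifying'' blow-ups and the passage to a simply connected variety; the paper blows up $\PP^9$ along $Z$ and recovers $\QQ_\ell h$, $\QQ_\ell e$ and then $H^*(Z)$ from $H^*(X)$. But the essential problem is the choice of obstruction.
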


By a characteristic-zero lifting of a smooth proper variety
$X/\overline{\FF}_p$, we mean a DVR $R$ of mixed characteristic $(0,p)$, with
residue field $\overline{\FF}_p$, and a flat proper $R$-scheme $\mathcal X$, together with an isomorphism
$\mathcal X\times_R\overline{\FF}_p \simeq X$.

The existence of such a variety as in Theorem \ref{thm:main-intro} shows that there is an essential difference between the $\ell$-adic \'etale cohomology rings of varieties in characteristics $p$ and $0$.  This is a genuinely new type of example, because, as we will review below, the construction of the known non-liftable varieties typically builds on some pathologies in characteristic $p$.
Compared to these pathologies, the $\ell$-adic \'etale cohomology rings are considered to behave nicely. 

In \cite{Sullivan-MIT-notes}*{p.~88}, Sullivan observed that the topological obstruction to characteristic zero lifting of a simply-connected smooth proper variety in characteristic $p$ is whether its $\QQ_{\ell}$-homotopy data descends to $\QQ$ for every $\ell\neq p$. To the best of our knowledge, Theorem \ref{thm:main-intro} is the first example realizing Sullivan's topological obstruction.

In \cite[p.~45, paragraph above \S18]{grothendieck2021pursuing}, Grothendieck expected that the homotopy types associated with good schemes should, after suitable completion, behave like finite polyhedra.
Theorem \ref{thm:main-intro} provides a negative answer to the following question regarding Grothendieck's expectation.
\begin{question}
For every $\ell\neq p$, is the $\ell$-completion of the \'etale homotopy type of every smooth proper variety in characteristic $p$ homotopy equivalent to the $\ell$-completion of a finite CW complex?    
\end{question}

\subsection*{Previous work on non-liftability.}\;

Serre gave the first examples of smooth projective varieties that do not
lift to characteristic zero.  His construction uses finite group actions
arising from projective representations that cannot themselves be lifted
\cite{Serre-Example}.

A second family of obstructions exploits the failure of Hodge theory.  Tango's study of Frobenius and the
counterexamples of Raynaud and Szpiro initiated a large circle of
constructions violating Kodaira-type vanishing
\cites{Tango-Frobenius,Raynaud-kodaira,Szpiro-kodaira,
Mukai-kodaira,Haboush-Lauritzen-kodaira,Lauritzen-Rao-kodaira,
Totaro-kodaira,Zdanowicz-arithmetic-rigidity,
Achinger-Zdanowicz-kodaira}.
When combined with additional arguments, many of them are shown to be non-liftable to characteristic 0.

A related cohomological direction exploits Betti numbers incompatible
with characteristic-zero Hodge theory.  Beginning with Hirokado,
Calabi--Yau threefolds with $b_3=0$ provided a basic source of such
examples
\cites{Hirokado-calabi-yau,Schroer-calabi-yau,Schoen-calabi-yau}.
Further non-liftable Calabi--Yau threefolds were obtained using fiber
products, small resolutions, and compactifications of Drinfeld
half-spaces
\cites{Hirokado-Ito-Saito-calabi-yau-I,
Hirokado-Ito-Saito-calabi-yau-II,Cynk-van-Straten-calabi-yau,
Langer-Drinfeld}.

Another strategy is to make a specifically characteristic-$p$ datum,
such as Frobenius or an elliptic fibration, intrinsically recoverable
from the variety, so that every lifting would force that datum to lift
as well
\cites{Achinger-Zdanowicz-more,Partsch-elliptic}.
Moret--Bailly families provide another source of smooth projective
varieties that do not lift formally to characteristic zero
\cite{Rossler-Schroer-more}.

The preceding obstructions use properties that are essentially geometric.
Compared to those ones, the obstructions of Esnault-Srinivas-Stix and van Dobben de Bruyn use the $\ell$-adic \'etale  topology more intrinsically, and are closer to our approach in spirit \cites{Esnault-Srinivas-Stix-fundamental-group, van-Dobben-de-Bruyn-more}.
However, those obstructions are still different than ours, because theirs rely on the \'etale fundamental groups, while ours is simply-connected.

\subsection*{Sketch of the main construction}\;

The main body of the paper is devoted to the proof of Theorem \ref{thm:main-intro}. 
The main fact that we use to deduce contradiction is that division algebras are not isomorphic to matrix algebras.
Here is a rough sketch:

Take a supersingular elliptic curve $E$.
Its rational endomorphism algebra 
 $\mathcal D:=\End(E)\otimes_{\mathbb{Z}}\mathbb{Q}$
is a four-dimensional quaternion division algebra over $\QQ$.  Choose
endomorphisms which generate $\mathcal D$.  Inside
$(E\times E)\times\PP^1$, we consider the two coordinate copies of $E$,
the diagonal, and the graphs of these generators.  
For each graph $\Gamma_\gamma$, choose a finite set
$T_\gamma\subset\PP^1$ such that the sets $T_\gamma$ are pairwise
disjoint and their cardinalities are pairwise distinct.  We then blow up the disjoint union of the subvarieties
$\Gamma_\gamma\times\{t\}$, for $t\in T_\gamma$.

Let $Z$ denote the resulting blow-up.  Its cohomology ring
$H^*_{\et}(Z,\QQ_\ell)$ already admits no $\QQ$-form.  Indeed, we 
show that the ring intrinsically recovers a labelled configuration that encodes the
action of $\mathcal D$ on $H^1_{\et}(E,\QQ_\ell)$.
If $H^*_{\et}(Z,\QQ_\ell)$ admitted a $\QQ$-form, the same configuration would give rise to an action of $\mathcal D$ on a two-dimensional $\QQ$-vector space, contradicting that $\mathcal D$ is a four-dimensional division algebra.

Finally, to get the simply-connected example, we embed $Z$ into a large projective space and then blow up $Z$ to get the $X$ in Theorem \ref{thm:main-intro}.
We show that the cohomology ring of $X$ still remembers the configuration above. 
Note that the nonliftability of $Z$, combined with \cite[Proposition~4.3]{Achinger-Zdanowicz-more}, already entails that $X$ is non-liftable, but we need extra argument for the stronger statement that the cohomology ring of $X$ has no $\QQ$-form.

\subsection*{Inspirations from complex geometry}\;

Although our problem concerns varieties in positive characteristic, it
is inspired by ideas from complex geometry.

Firstly, cohomology rings with different coefficients have been used to
distinguish non-homeomorphic Galois-conjugate complex varieties.
Galois-conjugate varieties have isomorphic $\ell$-adic and complex cohomology
algebras, whereas their $\mathbb{Z}$, $\mathbb{Q}$, or $\mathbb{R}$-cohomology algebras need
not be isomorphic
\cites{Charles-conjugate,Schreieder-conjugate,
Shen-Zhang-character-varieties}.
The examples in \textit{loc. cit.} concern the uniqueness of $\ZZ$- or $\QQ$- forms of a $\QQ_{\ell}$-algebra given by the cohomology of a variety, whereas the example in our paper concerns the existence.

Secondly, in her counterexample to the Kodaira problem, Voisin made endomorphisms of complex tori visible in their
cohomology rings by blowing up coordinate subtori, the diagonal, and
graphs of endomorphisms \cite{Voisin-homotopy-types}. 
Charles developed this mechanism
further in 
\cite{Charles-conjugate}.  We adapt the same idea to encode a generating set
of an entire quaternion endomorphism algebra, see \S\ref{subsec:auxiliary-variety}.

The final idea appears in Schreieder's construction.
There, any isomorphism between the cohomology rings of certain blow-ups induces an isomorphism between the even-degree cohomology
rings of their blow-up centers \cite{Schreieder-conjugate}.
Our final blow-up plays a similar role: it produces a simply connected
variety while retaining the obstruction already encoded in the
cohomology ring of its center, see \S\ref{subsec:simply-connected-variety}.

\subsection*{Organization}\;

 \S\ref{subsec: prep} records some preparatory standard results.  
In \S\ref{subsec:auxiliary-variety}, we construct the auxiliary threefold $Z$ and show that $H^*(Z)$ has no $\QQ$-form.  
In \S\ref{subsec:simply-connected-variety}, we
form $X=\Bl_Z(\PP^9_k)$, which is simply-connected, with cohomology ring that can recover that of $Z$, so the obstruction persists, proving Theorem \ref{thm:main-intro}.

\subsection*{Notation}\;

In this paper, unless otherwise specified, we write $H^*(-)=H^*_{\et}(-,\QQ_\ell)$ and omit Tate twists.

\section{Preparatory results}\label{subsec: prep}

We collect some standard facts to be used in the construction. 

\subsection{$\QQ$-forms and liftings}
\label{subsubsec:Q-forms-and-liftings}
\;

\begin{lemma}
\label{lem:no-Q-form-obstructs-lifting}
Let $X/\overline{\FF}_p$ be smooth and proper.  If
$H^*_{\et}(X,\QQ_\ell)$ admits no $\QQ$-form, then $X$ does not admit a
characteristic-zero lifting.
\end{lemma}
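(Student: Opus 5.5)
We prove the contrapositive: a characteristic-zero lifting $\mathcal X/R$ of $X$ produces a $\QQ$-form of $H^*(X,\QQ_\ell)$. The $\QQ$-form will be the singular cohomology ring of a complex variety obtained from the generic fibre.

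\emph{Step 1: the lifting is smooth.} Let $K=\mathrm{Frac}(R)$ and $\eta$ the generic point of $\Spec R$. Since $\mathcal X$ is flat and proper over $R$ and its special fibre $X$ is smooth, the smooth locus of $\mathcal X\to\Spec R$ is open and contains the closed fibre. By properness its complement has closed image missing the closed point, so the complement is empty. Thus $\mathcal X\to \Spec R$ is smooth and proper.

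\emph{Step 2: specialization.} Fix a geometric generic point $\bar\eta$ over an algebraic closure $\overline K$. Smooth and proper base change gives an isomorphism of graded $\QQ_\ell$-algebras
\[
H^*_{\et}(\mathcal X_{\bar\eta},\QQ_\ell)\;\cong\; H^*_{\et}(X,\QQ_\ell),
\]
where $\ell\neq p$ is invertible on $R$. This map is compatible with cup products, because it is built from restriction maps $H^*(\mathcal X_{R^{sh}})\to H^*(X)$ and $H^*(\mathcal X_{R^{sh}})\to H^*(\mathcal X_{\bar\eta})$, both isomorphisms and both ring homomorphisms. Here we may replace $R$ by its strict henselization, which changes nothing since the residue field is already algebraically closed.

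\emph{Step 3: pass to $\CC$.} The variety $\mathcal X_{\overline K}$ is smooth and proper over the characteristic-zero field $\overline K$, so it is defined over a finitely generated subfield $L_0\subset\overline K$. Choose an embedding $L_0\hookrightarrow\CC$; this exists since $\CC$ has infinite transcendence degree and is algebraically closed. Invariance of étale cohomology under extension of algebraically closed fields in characteristic zero gives ring isomorphisms
\[
H^*_{\et}(\mathcal X_{\overline K})\cong H^*_{\et}(\mathcal X_{\overline{L_0}})\cong H^*_{\et}(Y,\QQ_\ell),
\]
where $Y:=\mathcal X_{L_0}\otimes\CC$. Artin's comparison theorem, which respects cup products, gives $H^*_{\et}(Y,\QQ_\ell)\cong H^*(Y(\CC),\QQ)\otimes_\QQ\QQ_\ell$.

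Because $Y(\CC)$ is a compact space with finite-dimensional singular cohomology, $H^*(Y(\CC),\QQ)$ is a graded $\QQ$-algebra, and it is a $\QQ$-form of $H^*(X,\QQ_\ell)$. This contradicts the hypothesis.

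The main obstacle is bookkeeping rather than a substantive difficulty: every comparison isomorphism must be checked to respect the ring structure. The delicate one is the specialization isomorphism, which I would handle through the restriction description in Step 2. Step 1 is needed because the paper's definition of a lifting only requires $\mathcal X$ to be flat and proper; if the definition already assumed smoothness, that step could be dropped.
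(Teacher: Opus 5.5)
Your proposal is correct and follows the paper's proof essentially step for step: smoothness of $\mathcal X\to\Spec R$ via the closed non-smooth locus and properness, the cup-product-compatible specialization isomorphism (which you justify through the restriction maps from the total space over the strict henselization, where the paper instead cites the Stacks Project description), and then descent of the generic fibre to a finitely generated field embedded in $\CC$, followed by invariance under change of algebraically closed field and Artin comparison. One small quibble: passing to the strict henselization can genuinely change $R$ (to its henselization) when $R$ is not henselian, but since $K^{h}/K$ is algebraic this changes neither the special fibre nor the geometric generic fibre, so your argument is unaffected.
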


\begin{proof}
Suppose that $\mathcal X$ is a flat proper lifting of $X$ over a
mixed-characteristic DVR $R$, and write $K=\operatorname{Frac}(R)$.
The morphism $\mathcal X\to\Spec R$ is automatically smooth.  Indeed,
its smooth locus contains the special fiber.  The nonsmooth locus is
closed, and its image in $\Spec R$ is closed because $\mathcal X$ is
proper.  Since this image does not contain the closed point of
$\Spec R$, it must be empty.
Let $\overline K$ be an algebraic closure of $K$.  Since $\mathcal X\to\Spec R$ is smooth and proper, we have the specialization isomorphism of cohomology groups
$H^*_{\et}(X,\ZZ/\ell^n\ZZ)
\cong
H^*_{\et}(\mathcal X_{\overline K},\ZZ/\ell^n\ZZ)$;
see \cite[VI, Corollary~4.2]{Milne-etale-cohomology}.
By the description of the specialization isomorphism as in \cite[\href{https://stacks.math.columbia.edu/tag/0GJ3}{Tag 0GJ3}]{stacks-project}, the isomorphism respects the ring structure.
After descending the generic fiber to a finitely generated field over
$\QQ$ and choosing an embedding of that field into $\CC$, let $Y/\CC$
be the resulting complex variety.  Invariance under extension of
algebraically closed fields and the comparison theorem give us that $H^*_{\mathrm{sing}}(Y(\CC),\QQ)$ is a $\QQ$-form of
$H^*_{\et}(X,\QQ_\ell)$; see
\cite[VI, Corollary~4.3 and III, Theorem~3.12]
{Milne-etale-cohomology}.
\end{proof}

\subsection{Cohomology of blow-ups}
\label{subsec:cohomology-of-blowups}\;

We begin by recalling the blow-up formula for \'etale cohomology and its
compatibility with cup products, which will be used in both stages of the
construction.

Let $W$ be a smooth variety and let $i:C\hookrightarrow W$ be a smooth closed subvariety of codimension
$c\geq 2$, and let
\[
f:\widetilde W=\Bl_C(W)\longrightarrow W
\]
be the blow-up.  Denote its exceptional divisor by $j:F\hookrightarrow
\widetilde W$, its projection to $C$ by $g:F\to C$, and set
$\xi=c_1(\OO_F(1))$.

The projective bundle formula and the blow-up formula
\cite{SGA7-II}*{Expos\'e XVIII, formulas (1.2.2) and (2.2.2)}
give an isomorphism
\begin{equation}
    \label{eqn: blowup for}
    H^n(W)\oplus\bigoplus_{r=1}^{c-1}H^{n-2r}(C)
\xrightarrow{\sim} H^n(\widetilde W),
\quad 
(a,\alpha_1,\ldots,\alpha_{c-1})
\mapsto
f^*a+\sum_{r=1}^{c-1}
j_*\bigl(g^*\alpha_r\cdot\xi^{r-1}\bigr).
\end{equation}
Moreover, the projection formula gives
\[
f^*a\cdot j_*\bigl(g^*\alpha\cdot\xi^{r-1}\bigr)
=
j_*\bigl(g^*(i^*a\cdot\alpha)\cdot\xi^{r-1}\bigr).
\]

Therefore, $f^*$ identifies $H^1(W)$ with $H^1(\widetilde W)$. Moreover,
if we write the cohomology class of the exceptional divisor as $e:=j_*(1_F)$ with  $1_F\in H^0(F)$, then we have the isomorphism
\begin{equation}
\label{eqn: proj formula}
    \ker\left(
H^1(\widetilde W)\xrightarrow{\ \cdot e\ }H^3(\widetilde W)
\right)
\cong 
\ker\left(
H^1(W)\xrightarrow{\ i^*\ }H^1(C)
\right).
\end{equation}

Therefore, multiplication by the exceptional class records restriction to the
blow-up center.

\section{The auxiliary variety $Z$}
\label{subsec:auxiliary-variety}

The goal of this section is to construct a smooth projective threefold whose blow-up centers
encode the endomorphism algebra of a supersingular elliptic curve.

\subsection{Construction of $Z$}\;

Set $k=\overline{\FF}_p$, and choose a supersingular elliptic curve $E/k$.
Such a curve exists for every $p$; see
\cite[Chapter~V, \S4]{Silverman-AEC}.  By
\cite[Remark~III.9.2 and Theorem~V.3.1(a)(iv)]{Silverman-AEC}, its
rational endomorphism algebra
\[
\mathcal D:=\End(E)\otimes_{\ZZ}\QQ
\]
is a four-dimensional quaternion division algebra over $\QQ$.

Choose distinct endomorphisms $\alpha,\beta\in\End(E)\setminus\{0,1\}$
which generate $\mathcal D$ as a $\QQ$-algebra, and set
\begin{equation}
    \label{eqn: I}
    I:=\{\infty,0,1,\alpha,\beta\}.
\end{equation}
For $\gamma\in I\setminus\{\infty\}$, let $\Gamma_\gamma\subset E\times E$
be the graph of $\gamma$, and set $\Gamma_\infty:=\{0_E\}\times E$.

The curves $\Gamma_\gamma$ are not disjoint in $E\times E$.  We
separate them using an auxiliary projective line.  For each $\gamma\in I$,
choose a finite nonempty subset $T_\gamma\subset\PP^1(k)$
such that the sets $T_\gamma$ are pairwise disjoint and the integers
$m_\gamma:=\#T_\gamma$ are pairwise distinct.  Set
\[
Y:=(E\times E)\times\PP^1_k
\]
and, for $\gamma\in I$ and $t\in T_\gamma$, let
\[
C_{\gamma,t}:=\Gamma_\gamma\times\{t\}\subset Y.
\]
These curves are pairwise disjoint.  Define
\[
C:=\bigsqcup_{\gamma\in I}\ \bigsqcup_{t\in T_\gamma}C_{\gamma,t},\]
and
\[f:Z:=\Bl_C(Y)\longrightarrow Y.
\]
We have that $Z$ is a smooth projective threefold.

\subsection{Encoding the quaternion algebra in $H^1(Z)$}
\label{subsubsec:encoding-quaternion-algebra}\;

Set $V:=H^1(E)$.
The K\"unneth formula and \eqref{eqn: blowup for} give natural identifications
\[
H^1(Z)\cong H^1(Y)\cong H^1(E\times E)
\cong V\oplus V.
\]

For each $\gamma\in I$ as in \eqref{eqn: I}, define
\[
K_\gamma:=
\ker\left(
H^1(E\times E)\longrightarrow H^1(\Gamma_\gamma)
\right).
\]
For $\gamma\neq\infty$, restriction to the graph of $\gamma$ is given by
\[
V\oplus V\longrightarrow V,
\qquad
(u,v)\longmapsto u+\gamma^*v.
\]
Consequently,
\[
K_\gamma
=
\left\{(-\gamma^*v,v):v\in V\right\}.
\]
For the three distinguished curves, this gives
\[
K_\infty=V\oplus 0,
\qquad
K_0=0\oplus V,
\qquad
K_1=\left\{(-v,v):v\in V\right\}.
\]

After changing the sign on the second copy of $V$, the subspaces
$K_\infty$ and $K_0$ become the two coordinate subspaces, $K_1$ becomes
the diagonal, and $K_\gamma$ becomes the graph of $\gamma^*$.  
Therefore, $K_\infty$, $K_0$, and $K_1$ specify two copies of $V$ together with an
identification between them, while $K_\alpha$ and $K_\beta$ encode the
linear maps $\alpha^*$ and $\beta^*$.  Since $\alpha$ and $\beta$
generate $\mathcal D$, the labeled configuration
\[
(K_\infty,K_0,K_1,K_\alpha,K_\beta)
\]
encodes the action of $\mathcal D^{\mathrm{op}}$ on $V$.  Here the
opposite algebra appears because pullback reverses composition, and this
distinction will not affect the argument.

\begin{lemma}
\label{lem:transverse-subspaces}
For distinct $\gamma,\delta\in I$, we have that $K_\gamma\cap K_\delta=0$.
\end{lemma}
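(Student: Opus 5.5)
Using the explicit descriptions $K_\infty=V\oplus 0$ and $K_\gamma=\{(-\gamma^*v,v):v\in V\}$ for $\gamma\neq\infty$, the plan splits into two cases.

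\emph{Case $\delta=\infty$, $\gamma\neq\infty$.} An element of $K_\gamma\cap K_\infty$ has the form $(-\gamma^*v,v)$ with second coordinate zero. So $v=0$ and the element vanishes.

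\emph{Case $\gamma,\delta\neq\infty$.} An element of the intersection is $(-\gamma^*v,v)=(-\delta^*w,w)$. Comparing second coordinates gives $v=w$, and then comparing first coordinates gives $(\gamma-\delta)^*v=0$, using that pullback $\End(E)\to\End(V)$ is additive. Thus it suffices to show that for every nonzero $\epsilon\in\End(E)$, the map $\epsilon^*$ on $V=H^1(E)$ is injective; we apply this to $\epsilon=\gamma-\delta$, which is nonzero because $\gamma$ and $\delta$ are distinct elements of $I\setminus\{\infty\}$.

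The key step is injectivity of $\epsilon^*$ for $\epsilon\neq 0$, and it is standard. Since $E$ is an elliptic curve, a nonzero endomorphism $\epsilon$ is an isogeny, so there is a dual isogeny $\hat\epsilon$ with $\hat\epsilon\circ\epsilon=[\deg\epsilon]$ (Silverman, Chapter III). Pullback reverses composition, so $\epsilon^*\hat\epsilon^*=[\deg\epsilon]^*$. On $H^1(E,\QQ_\ell)$ the map $[n]^*$ acts as multiplication by $n$: identify $H^1$ with the dual of $V_\ell E\cong T_\ell E\otimes\QQ_\ell$, or simply note that $[n]^*$ is additive in $n$ on $H^1$ via the group law. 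Hence $\epsilon^*\hat\epsilon^*=\deg\epsilon\cdot\mathrm{id}$ with $\deg\epsilon\neq 0$ invertible in $\QQ_\ell$, so $\epsilon^*$ is invertible. Alternatively, $\epsilon$ is finite surjective, and the trace map gives $\epsilon_*\epsilon^*=\deg\epsilon$.

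The main point needing care is justifying that $[n]^*=n$ on $H^1$, or equivalently that the additive structure is compatible with pullback. The cleanest route is the identification $H^1_{\et}(E,\ZZ_\ell)\cong \mathrm{Hom}(T_\ell E,\ZZ_\ell)$, under which $\epsilon^*$ is the transpose of the action of $\epsilon$ on $T_\ell E$. Injectivity then follows because $\ker\epsilon$ is finite, so $\epsilon$ is injective on $V_\ell E$. I would cite this identification and conclude.
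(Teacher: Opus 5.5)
Your proposal is correct and is essentially the paper's proof: it uses the same case split, reduces to $(\gamma-\delta)^*v=0$, and concludes from the invertibility of $(\gamma-\delta)^*$ on $V$. The only difference is that you justify this invertibility explicitly (via the dual isogeny, or via $H^1$ as the dual of $V_\ell E$), whereas the paper simply invokes that $\gamma-\delta$ is a nonzero element of the division algebra $\mathcal D$; this is the same fact, since $\hat\epsilon/\deg\epsilon$ is the inverse of $\epsilon$ in $\mathcal D$.
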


\begin{proof}
Suppose first that $\gamma,\delta\neq\infty$.  A vector in
$K_\gamma\cap K_\delta$ has the form $(-\gamma^*v,v)=(-\delta^*v,v)$
for some $v\in V$.  Therefore, we have that $(\gamma-\delta)^*v=0$.
Since $\gamma-\delta$ is a nonzero element of the division algebra
$\mathcal D$, its action on $V$ is invertible.  Therefore $v=0$.  The
case in which one of the labels is $\infty$ follows immediately from
$K_\infty=V\oplus 0$.
\end{proof}

For $\gamma\in I$ and $t\in T_\gamma$, let $e_{\gamma,t}\in H^2(Z)$ be the cohomology class defined by the exceptional divisor over $C_{\gamma,t}$.  
By \eqref{eqn: proj formula}, we have the identification
\[
\ker\left(
H^1(Z)\xrightarrow{\ \cdot e_{\gamma,t}\ }H^3(Z)
\right)
=
K_\gamma.
\]

Define the exceptional block associated with $\gamma$ by
\[
B_\gamma:=
\bigoplus_{t\in T_\gamma}\QQ_\ell e_{\gamma,t}
\subset H^2(Z).
\]
We have that $\dim_{\QQ_\ell}B_\gamma=m_\gamma$.

In summary, each exceptional block records one subspace $K_\gamma$, while its
dimension records the corresponding label $\gamma\in I$.

\subsection{Recovering the labeled configuration}
\label{subsubsec:recovering-configuration}\;

We now regard $H^*(Z)$ only as an abstract graded algebra.  The purpose of
this section is to prove the following.

\begin{proposition}[Intrinsic recovery]
\label{prop:recovering-configuration}
The graded $\QQ_\ell$-algebra $H^*(Z)$ intrinsically determines the
labeled configuration
\[
(K_\infty,K_0,K_1,K_\alpha,K_\beta)
\]
of two-dimensional subspaces of $H^1(Z)$.

Moreover, let $F\subset\QQ_\ell$ be a subfield, let $R_F$ be a graded $F$-algebra, and suppose that there is an isomorphism $\varphi:R_F\otimes_F\QQ_\ell\xrightarrow{\sim}H^*(Z)$
of graded algebras.  Then there exist two-dimensional $F$-subspaces $K_{\gamma,F}\subset R_F^1$, $\gamma\in I$ such that $\varphi\bigl(K_{\gamma,F}\otimes_F\QQ_\ell\bigr)=K_\gamma$ for every $\gamma\in I$.
\end{proposition}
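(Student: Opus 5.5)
The plan is to recover everything from the cup product by linear-algebra operations (images, annihilators, kernels, rank conditions). These commute with extension of scalars, so the second, $F$-rational assertion follows from the same recipe applied to $R_F$.

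\textbf{Step 1: the classes coming from $E\times E$.} Put $P:=H^1(Z)\cdot H^1(Z)\subset H^2(Z)$. By Künneth on $E\times E$, $H^2(E\times E)$ is spanned by products of degree-one classes. Since $H^1(Z)=f^*H^1(E\times E)$, this gives $P=f^*H^2(E\times E)$. Then set
\[
A:=\{x\in H^2(Z): x\cdot P\cdot H^1(Z)=0\}.
\]
I will check that $A=P\oplus\bigoplus_\gamma B_\gamma$. Write $H^2(Z)=P\oplus\QQ_\ell h\oplus\bigoplus B_\gamma$ with $h$ pulled back from a point of $\PP^1$, and $H^1(\PP^1)=0$.
\begin{itemize}
\item For $x=e_{\gamma,t}$, the projection formula gives $e\cdot p\cdot u=j_*(g^*i^*(pu))$. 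Here $i^*(pu)\in H^3(C_{\gamma,t})=0$.
\item For $x\in P$, the product lands in the image of $H^5(E\times E)=0$.
\item For $x=h$, the product is $h\otimes H^3(E\times E)\neq 0$.
\end{itemize}
The same projection formula shows $u\cdot e_{\gamma,t}\in j_*H^1(C_{\gamma,t})$. Moreover, the exceptional summands of \eqref{eqn: blowup for} in $H^3(Z)$ are independent of each other and of $P\cdot H^1(Z)=f^*H^3(E\times E)$.

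\textbf{Step 2: recovering $K_\gamma$ and the labels.} For $u\in H^1(Z)$ put
\[
\operatorname{Ann}_A(u):=\{x\in A: x\cdot u\in P\cdot H^1(Z)\}.
\]
By Step 1 and \eqref{eqn: proj formula}, $\operatorname{Ann}_A(u)=P\oplus\bigoplus_{\gamma:\,u\in K_\gamma}B_\gamma$. By Lemma~\ref{lem:transverse-subspaces}, a nonzero $u$ lies in at most one $K_\gamma$. Hence
\[
K_\gamma\setminus\{0\}=\{u\in H^1(Z): \dim\operatorname{Ann}_A(u)=\dim P+m_\gamma\}.
\]
Since the $m_\gamma$ are pairwise distinct, the dimension jump both singles out $K_\gamma$ and tells us its label $\gamma$. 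This defines the labeled configuration purely in terms of the graded algebra, and any graded isomorphism carries it to the corresponding configuration.

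\textbf{Step 3: descent to $F$.} Perform the same constructions in $R_F$ to get $P_F$, $A_F$, and $\operatorname{Ann}_{A_F}(u)$. These base-change to $P$, $A$, and the annihilators under $\varphi$. The condition $\dim\operatorname{Ann}_A(u)\ge \dim P+m_\gamma$ is a rank condition on a matrix whose entries are linear in the coordinates of $u$ with coefficients in $F$, computed in $F$-bases of $R_F$. So it cuts out a Zariski-closed subset of $H^1$ defined over $F$. By Step 2 this subset is $\bigcup_{m_\delta\ge m_\gamma}K_\delta$. Removing the analogous $F$-closed set for the next larger value of $m$ and taking the Zariski closure shows that $K_\gamma$ is an $F$-defined closed subvariety of $H^1$. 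A linear subspace that is $F$-closed is defined by the degree-one part of its ideal, and that part has an $F$-basis. Therefore $K_\gamma=K_{\gamma,F}\otimes_F\QQ_\ell$ with $K_{\gamma,F}:=K_\gamma\cap\varphi(R_F^1)$, which is two-dimensional over $F$.

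\textbf{Main obstacle.} I expect the main difficulty to be Step 1: pinning down $A$ exactly, including checking that no combination involving $h$ slips into the annihilator. Step 3 needs some care with the closure and rationality argument, but it is standard once Step 2 is set up.
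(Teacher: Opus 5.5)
Your proof is correct. It runs the paper's argument in the dual direction.

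\textbf{How the routes differ.} The paper works in $\overline H^2=H^2(Z)/S^2$. It considers the locus $\mathscr R$ of classes $\overline x$ for which $\mu_{\overline x}\colon S\to\overline H^3$ has kernel of dimension at least $2$, and shows $\mathscr R=\bigcup_\gamma B_\gamma$. A nonzero $\theta$-component makes $\mu_{\overline x}$ injective, and Lemma~\ref{lem:transverse-subspaces} excludes mixed blocks. It then singles out each $B_\gamma$ by its dimension $m_\gamma$ and reads off $K_\gamma=\ker\mu_{\overline x}$ for any nonzero $\overline x\in B_\gamma$. You instead fix $u\in H^1(Z)$ and measure the jump in $\dim\operatorname{Ann}_A(u)$. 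In your version $K_\gamma\setminus\{0\}$ appears directly as a jump stratum in $H^1$, and the label is the size $m_\gamma$ of the jump. The ingredients are the same in both: the blow-up decomposition in degrees $2$ and $3$, the formula $u\cdot e_{\gamma,t}=j_*g^*(u|_{C_{\gamma,t}})$, transversality, and the distinct $m_\gamma$.

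\textbf{Remark on Step 1.} The subspace $A$, and the step you flag as the main obstacle, are unnecessary. For $u\neq 0$ the class $\theta u$ lies in the K\"unneth summand $f^*\bigl(H^1(E\times E)\otimes H^2(\PP^1)\bigr)$. That summand is independent of $P\cdot H^1(Z)$ and of the exceptional summands. Hence already
\[
\{x\in H^2(Z): xu\in P\cdot H^1(Z)\}=P\oplus\bigoplus_{u\in K_\gamma}B_\gamma .
\]
This is the dual of the paper's observation that $a\neq 0$ forces $\mu_{\overline x}$ to be injective.

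\textbf{Remark on Step 3.} Your descent step is more explicit than the paper's, which settles descent with ``the same argument''. Either route needs a rationality argument showing that a distinguished piece of an $F$-defined degeneracy locus is $F$-rational. In the paper the piece $B_\gamma$ is singled out by its dimension; once it is $F$-rational, $K_\gamma$ is the kernel of an $F$-linear map and descends for free. In your route all $K_\delta$ have dimension $2$, so you need the stratification by jump size, the closure argument, and the degree-one-ideal argument applied to $K_\gamma$ itself.

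To make Step 3 airtight, phrase ``defined over $F$'' scheme-theoretically, or via $\overline{\QQ}_\ell$-points, rather than via $\QQ_\ell$-points. The reason is that $\QQ_\ell$ is neither algebraically closed nor algebraic over $F$. This costs nothing for two reasons. First, Step 2 holds verbatim over $\overline{\QQ}_\ell$, since the blow-up decompositions and Lemma~\ref{lem:transverse-subspaces} are stable under field extension. Second, in characteristic $0$, reduced closures of open subschemes commute with the flat base change $F\to\overline{\QQ}_\ell$.
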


We need some setup before proving Proposition~\ref{prop:recovering-configuration}.
Set
\[
S:=H^1(Z).
\]
Define the quotient spaces
\[
\overline H^2:=H^2(Z)/S^2,
\qquad
\overline H^3:=H^3(Z)/S^3,
\]
where the product of subspaces denotes the linear space of all their products.

For $\overline{x}\in\overline H^2$, choose a lift
$x\in H^2(Z)$.  Multiplication by $x$ induces a linear map
\begin{equation}
\label{eqn:intrinsic-multiplication-map}
\mu_{\overline{x}}:S\longrightarrow\overline H^3,
\qquad
s\longmapsto s\cdot x\pmod{S^3}.
\end{equation}
This map is independent of the choice of $x$.

The spaces $S^i, i=1,2,3$, $\overline H^2$, and $\overline H^3$, as well as the maps $\mu_{\overline{x}}$, are therefore determined solely by the graded algebra structure of $H^*(Z)$.

Let $\pi_E:Y\to E\times E$, and $\pi_{\PP^1}:Y\to\PP^1$
be the two projections.  We use $f^*\pi_E^*$ to regard
$H^*(E\times E)$ as a subspace of $H^*(Z)$, and set
\[
\theta:=f^*\pi_{\PP^1}^*c_1(\OO_{\PP^1}(1))\in H^2(Z).
\]
We also write $\theta$ for its image in $\overline H^2$.

\begin{lemma}[Low-degree cohomology and multiplication]
\label{lem:low-degree-multiplication}
The following statements hold.

\begin{enumerate}
    \item The pullbacks above give us identifications
    \[
    S^2=H^2(E\times E),
    \qquad
    S^3=H^3(E\times E).
    \]

    \item There are direct-sum decompositions
    \[
    \overline H^2
    =
    \QQ_\ell\theta\oplus
    \bigoplus_{\gamma\in I}B_\gamma,\quad 
    \overline H^3
    =
    S\theta\oplus
    \bigoplus_{\gamma\in I}
    \bigoplus_{t\in T_\gamma}
    H^1(\Gamma_\gamma)e_{\gamma,t}.
    \]

    \item Every $\overline{x}\in\overline H^2$ has a unique expression
    \begin{equation}
    \label{eqn: x bar}
        \overline{x}
    =
    a\theta+
    \sum_{\gamma\in I}
    \sum_{t\in T_\gamma}
    b_{\gamma,t}e_{\gamma,t}, \quad a,b_{\gamma,t}\in \QQ_\ell
    \end{equation}
    For $s\in S$, the multiplication map
    \eqref{eqn:intrinsic-multiplication-map} is given by
    \begin{equation}
    \label{eqn: muxbar}
    \mu_{\overline{x}}(s)
    =
    as\theta+
    \sum_{\gamma\in I}
    \sum_{t\in T_\gamma}
    b_{\gamma,t}
    \bigl(s|_{\Gamma_\gamma}\bigr)e_{\gamma,t}.
    \end{equation}
\end{enumerate}
\end{lemma}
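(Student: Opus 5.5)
The plan is to compute everything explicitly via the blow-up formula \eqref{eqn: blowup for}, applied to $f:Z\to Y$ with $c=2$, combined with the Künneth formula for $Y=(E\times E)\times\PP^1$. Since $c=2$, only $r=1$ occurs, and since the centers are disjoint we get
\[
H^n(Z)=f^*H^n(Y)\oplus\bigoplus_{\gamma,t}j_{\gamma,t*}g_{\gamma,t}^*H^{n-2}(C_{\gamma,t}).
\]
Künneth gives $H^n(Y)=H^n(E\times E)\oplus H^{n-2}(E\times E)\theta$. In degree $1$ this yields $S=H^1(E\times E)$. In degree $2$ it gives $H^2(Z)=H^2(E\times E)\oplus\QQ_\ell\theta\oplus\bigoplus\QQ_\ell e_{\gamma,t}$, because $H^0(C_{\gamma,t})=\QQ_\ell$ and $j_*(1)=e_{\gamma,t}$. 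In degree $3$ it gives
\[
H^3(Z)=H^3(E\times E)\oplus H^1(E\times E)\theta\oplus\bigoplus j_*g^*H^1(C_{\gamma,t}),
\]
with $C_{\gamma,t}\cong\Gamma_\gamma$.

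For (1), note that $f^*$ and $\pi_E^*$ are ring maps, so $S^i\supseteq$ the image of $H^1(E\times E)^i$. The cohomology ring of an abelian surface is the exterior algebra on $H^1$, so $H^1(E\times E)^i=H^i(E\times E)$ for $i=2,3$. This gives $S^2\supseteq H^2(E\times E)$ and $S^3\supseteq H^3(E\times E)$. Conversely, $S=f^*\pi_E^*H^1(E\times E)$, so every product of elements of $S$ lies in $f^*\pi_E^*H^*(E\times E)$. This proves equality.

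For (2), quotient the decompositions above by $S^2$ and $S^3$. The $\overline H^2$ statement is immediate. For $\overline H^3$, I would identify the summand $j_*g^*H^1(C_{\gamma,t})$ with $H^1(\Gamma_\gamma)e_{\gamma,t}$. This needs a small argument. By the projection formula, for $s\in S$,
\[
s\cdot e_{\gamma,t}=j_*\bigl(g^*(s|_{C_{\gamma,t}})\bigr),
\]
and restriction $H^1(E\times E)\to H^1(\Gamma_\gamma)$ is surjective. For $\gamma\neq\infty$ this holds because $\Gamma_\gamma\cong E$ and the restriction is $(u,v)\mapsto u+\gamma^*v$. For $\gamma=\infty$ it holds because the restriction is the projection $(u,v)\mapsto v$. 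So the notation $H^1(\Gamma_\gamma)e_{\gamma,t}$ is well-defined as $\{s e_{\gamma,t}\}$, and it equals that summand. The term $S\theta$ is the Künneth piece $H^1(E\times E)\theta$.

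For (3), uniqueness of the expression follows from (2). For the formula, take the lift $x=a\theta+\sum b_{\gamma,t}e_{\gamma,t}$ and multiply by $s\in S$. The term $as\theta$ is clear. Each $s\cdot e_{\gamma,t}$ equals $(s|_{\Gamma_\gamma})e_{\gamma,t}$ by the projection formula computation above. Independence of the lift was already noted, since $S\cdot S^2\subseteq S^3$.

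The main obstacle is the degree-$3$ bookkeeping. One must check that the Künneth $\theta$-part and the exceptional parts are independent modulo $S^3$, and that the exceptional summand is exactly the image of multiplication by $e_{\gamma,t}$. Both follow from the directness of \eqref{eqn: blowup for} once the identification via the projection formula is made.
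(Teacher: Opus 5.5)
Your proof is correct and follows the same route as the paper, whose proof simply cites the K\"unneth formula, the exterior-algebra structure $\bigwedge^2H^1(E)\cong H^2(E)$, the blow-up formula \eqref{eqn: blowup for} with $c=2$, and the projection formula. You have written out the bookkeeping those citations compress, including the observation that surjectivity of $H^1(E\times E)\to H^1(\Gamma_\gamma)$ makes $H^1(\Gamma_\gamma)e_{\gamma,t}=\{s\cdot e_{\gamma,t}: s\in S\}$ coincide with the corresponding exceptional summand.
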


\begin{proof}
The lemma follows from the isomorphism $\bigwedge^2H^1(E)\cong H^2(E)$, the K\"unneth formula, \eqref{eqn: blowup for}, and \eqref{eqn: proj formula}.
\end{proof}

The above formulas enables us to characterize $B_{\gamma}$'s intrinsically as follows. 
Let 
\[\mathscr R:=\left\{
\overline{x}\in\overline H^2:
\dim_{\QQ_\ell}\ker(\mu_{\overline{x}})\geq 2
\right\}.\]
We see that $\mathscr R$ is invariant under the natural scaling action of $\QQ_{\ell}^*$.

\begin{lemma}[The rank-drop locus]
\label{lem:rank-drop-cone}
We have the equality
\begin{equation}
\label{eqn:rank-drop-cone}
\mathscr R
=
\bigcup_{\gamma\in I}B_\gamma.
\end{equation}
Moreover, if $0\neq\overline{x}\in B_\gamma$, then
\begin{equation}
    \label{eqn: kerk}
    \ker(\mu_{\overline{x}})=K_\gamma.
\end{equation}
Consequently, the irreducible components of $\mathscr R$ are given by the linear subspaces $B_\gamma$.
\end{lemma}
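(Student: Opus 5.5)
The plan is to compute $\ker(\mu_{\overline{x}})$ directly from formula \eqref{eqn: muxbar} of Lemma~\ref{lem:low-degree-multiplication}. That formula expresses $\mu_{\overline{x}}(s)$ as a sum of components lying in the direct summands $S\theta$ and $H^1(\Gamma_\gamma)e_{\gamma,t}$ of $\overline H^3$. Because the sum is direct, $s\in\ker(\mu_{\overline{x}})$ if and only if every component vanishes:
\[
as=0,\qquad b_{\gamma,t}\,s|_{\Gamma_\gamma}=0\quad\text{for all }\gamma\in I,\ t\in T_\gamma .
\]
Here we use that $s\mapsto s\theta$ is injective $S\to S\theta$ and that $H^1(\Gamma_\gamma)\to H^1(\Gamma_\gamma)e_{\gamma,t}$ is an isomorphism; both are implicit in the decomposition of part (2).

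This immediately gives \eqref{eqn: kerk} and one inclusion of \eqref{eqn:rank-drop-cone}. If $0\neq\overline{x}\in B_\gamma$, then $a=0$, only the $b_{\gamma,t}$ with fixed $\gamma$ can be nonzero, and at least one of them is nonzero. The kernel is therefore exactly $\{s: s|_{\Gamma_\gamma}=0\}=K_\gamma$. This space is two-dimensional, since restriction $H^1(E\times E)\to H^1(\Gamma_\gamma)\cong V$ is surjective (it is $(u,v)\mapsto u+\gamma^*v$, or projection in the case $\gamma=\infty$). Hence $B_\gamma\subset\mathscr R$; the zero vector lies in $\mathscr R$ trivially, since $\mu_0=0$.

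For the reverse inclusion, take $\overline{x}\in\mathscr R$. If $a\neq0$, the kernel is $0$, so $a=0$. Let $J=\{\gamma: b_{\gamma,t}\neq0\text{ for some }t\}$. Then
\[
\ker(\mu_{\overline{x}})=\bigcap_{\gamma\in J}K_\gamma .
\]
If $J$ is empty, then $\overline{x}=0$. If $|J|\geq2$, Lemma~\ref{lem:transverse-subspaces} shows the intersection is $0$, contradicting $\dim\geq2$. So $|J|=1$ and $\overline{x}\in B_\gamma$ for the unique $\gamma\in J$.

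Finally, $\mathscr R$ is a finite union of linear subspaces, so it is Zariski closed. Its irreducible components are the maximal members among the $B_\gamma$. Since the $B_\gamma$ are independent summands of $\overline H^2$ by part (2), none contains another; indeed $B_\gamma\cap B_\delta=0$ for $\gamma\neq\delta$. Hence each $B_\gamma$ is a component. The only point needing care is the first step, namely that the component maps are injective, so that vanishing in $\overline H^3$ is equivalent to componentwise vanishing. I would justify this by appeal to the blow-up decomposition \eqref{eqn: blowup for}, under which $H^1(\Gamma_\gamma)e_{\gamma,t}$ is the image of $H^1(C_{\gamma,t})$ via $j_*g^*$, together with the Künneth decomposition for $S\theta$.
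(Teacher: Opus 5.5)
Your proposal is correct and follows essentially the same route as the paper: you read off $\ker(\mu_{\overline{x}})$ componentwise from \eqref{eqn: muxbar} via the direct-sum decomposition of $\overline H^3$, observe that $a\neq 0$ forces injectivity, that a single nonzero block $B_\gamma$ gives kernel $K_\gamma$, and that nonzero components in two blocks give a kernel inside $K_\gamma\cap K_\delta=0$ by Lemma~\ref{lem:transverse-subspaces}. Your extra remarks make explicit what the paper compresses into ``the lemma follows'': that $\dim K_\gamma=2$, and that the $B_\gamma$ are nonzero (because $T_\gamma\neq\emptyset$) and pairwise meet only in $0$, hence are exactly the irreducible components.
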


\begin{proof}
Let us write $\overline{x}\in \overline{H}^2$ out as in \eqref{eqn: x bar}.
By \eqref{eqn: muxbar}, if $a\neq 0$, then the $S\theta$-component of $\mu_{\overline{x}}(s)$ is $as\theta\neq 0$ for every $0\neq s\in S$, so that $\mu_{\overline{x}}$ is injective.

On the other hand, for any $0\neq x_{\gamma}\in B_{\gamma}$, using \eqref{eqn: muxbar} again, we have that  $\ker(\mu_{x_\gamma})=K_\gamma$.

If $\overline{x}$ has nonzero components in two different blocks, say $B_\gamma$ and $B_\delta$, then 
above combined with Lemma~\ref{lem:transverse-subspaces} entails that
\[
\ker(\mu_{\overline{x}})
\subset K_\gamma\cap K_\delta=0.
\]
The lemma follows.
\end{proof}

\begin{proof}[Proof of Proposition~\ref{prop:recovering-configuration}]
By \eqref{eqn:rank-drop-cone}, we can recover the cone $\bigcup_{\gamma\in I}B_\gamma$ from the algebra structure $H^*(Z)$.
Since $\dim_{\QQ_\ell}B_\gamma=m_\gamma$ are pairwise distinct, we can further recover each $B_{\gamma}$ individually.

Once $B_\gamma$ has been recovered, using \eqref{eqn: kerk}, we can recover 
\[
\bigcap_{\overline{x}\in B_{\gamma}} \ker(\mu_{\overline{x}})=\bigcap_{\overline{x}\in B_{\gamma}\setminus \{0\}} K_{\gamma}=K_{\gamma}.
\]
This proves that the labeled configuration
$(K_\gamma)_{\gamma\in I}$ is determined intrinsically by $H^*(Z)$.

It remains to verify compatibility with extension of coefficients.
Suppose that
\[
\varphi:R_F\otimes_F\QQ_\ell\xrightarrow{\sim}H^*(Z)
\]
is an isomorphism of graded algebras.  Applying the same constructions
over $F$ gives quotient spaces $\overline R_F^2
:=
R_F^2/(R_F^1)^2$, $\overline R_F^3
:=
R_F^3/(R_F^1)^3$,
 the multiplication maps $\mu^F_{\overline{x}}:
R_F^1\longrightarrow\overline R_F^3$,
and a rank-drop cone
\[
\mathscr R_F
:=
\left\{
\overline{x}\in\overline R_F^2:
\dim_F\ker(\mu^F_{\overline{x}})\geq 2
\right\}.
\]
After extending scalars to $\QQ_\ell$, the isomorphism $\varphi$
identifies $\mathscr R_F$ with $\mathscr R$.
The same argument as in Lemma \ref{lem:rank-drop-cone} shows that we can recover the corresponding $K_{\gamma,F}$ such that $\varphi(K_{\gamma,F}\otimes_F\QQ_\ell)=K_{\gamma}$.
\end{proof}

\subsection{The obstruction to a $\QQ$-form}
\label{subsubsec:obstruction-to-Q-form}\;

We now use the recovered configuration to prove that the cohomology ring
of $Z$ cannot be defined over $\QQ$.

\begin{proposition}
\label{prop:Z-has-no-Q-form}
The graded $\QQ_\ell$-algebra $H^*(Z)$ admits no $\QQ$-form.
\end{proposition}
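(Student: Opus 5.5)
We argue by contradiction: suppose $R_{\QQ}$ is a graded $\QQ$-algebra with an isomorphism $\varphi:R_{\QQ}\otimes_{\QQ}\QQ_\ell\xrightarrow{\sim}H^*(Z)$. Applying Proposition~\ref{prop:recovering-configuration} with $F=\QQ$, we obtain two-dimensional $\QQ$-subspaces $K_{\gamma,\QQ}\subset R^1_{\QQ}$, $\gamma\in I$, with $\varphi(K_{\gamma,\QQ}\otimes\QQ_\ell)=K_\gamma$. Since $R^1_{\QQ}\otimes\QQ_\ell\cong S\cong V\oplus V$ is four-dimensional, $R^1_{\QQ}$ is four-dimensional. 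Moreover, pairwise intersections $K_{\gamma,\QQ}\cap K_{\delta,\QQ}$ vanish, because they vanish after base change by Lemma~\ref{lem:transverse-subspaces}. Hence $R^1_{\QQ}=K_{\infty,\QQ}\oplus K_{0,\QQ}$ by dimension count.

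Next, mimic the linear algebra of \S\ref{subsubsec:encoding-quaternion-algebra} over $\QQ$. Set $W:=K_{0,\QQ}$, a two-dimensional $\QQ$-space. Each $K_{\gamma,\QQ}$ with $\gamma\neq\infty$ is transverse to $K_{\infty,\QQ}$ and of complementary dimension, so it is the graph of a unique $\QQ$-linear map $g_\gamma:W\to K_{\infty,\QQ}$. Transversality to $K_{0,\QQ}$ makes $g_1$ an isomorphism. Define $A_\gamma:=g_1^{-1}\circ g_\gamma\in\End_{\QQ}(W)$. After base change, $\varphi$ identifies $K_{\infty,\QQ}\otimes\QQ_\ell$ with $V\oplus 0$ and $W\otimes\QQ_\ell$ with $0\oplus V$. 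Using the explicit descriptions $K_\gamma=\{(-\gamma^*v,v)\}$, the maps become $g_\gamma\otimes\QQ_\ell=-\gamma^*$ as maps $V\to V$, and hence $A_\gamma\otimes\QQ_\ell$ corresponds to $\gamma^*$ under an isomorphism $W\otimes\QQ_\ell\cong V$. Thus $A_\alpha$ and $A_\beta$ are $\QQ$-forms of $\alpha^*$ and $\beta^*$ acting on $V$.

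Finally, let $\mathcal A\subset\End_{\QQ}(W)\cong M_2(\QQ)$ be the $\QQ$-subalgebra generated by $A_\alpha$ and $A_\beta$. Then $\mathcal A\otimes\QQ_\ell$ is the image of the $\QQ_\ell$-algebra generated by $\alpha^*,\beta^*$ in $\End(V)$. This image equals the image of $\mathcal D^{\mathrm{op}}\otimes\QQ_\ell\to\End_{\QQ_\ell}(V)$, which is injective because $\mathcal D\otimes\QQ_\ell$ is a central simple algebra (so any nonzero representation is faithful). Hence $\dim_{\QQ_\ell}(\mathcal A\otimes\QQ_\ell)=4$, so $\mathcal A=M_2(\QQ)$. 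On the other hand, the surjection from the free algebra onto $\mathcal D^{\mathrm{op}}$ and onto $\mathcal A$ have the same kernel: any noncommutative polynomial $P$ satisfies $P(A_\alpha,A_\beta)=0$ if and only if $P(\alpha^*,\beta^*)=0$ on $V$, which by faithfulness holds if and only if $P(\alpha,\beta)=0$ in $\mathcal D^{\mathrm{op}}$. Therefore $\mathcal A\cong\mathcal D^{\mathrm{op}}$, a division algebra, while $M_2(\QQ)$ has zero divisors. This is the contradiction.

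The main obstacle is the bookkeeping in the second step: checking that the labelled subspaces descended via Proposition~\ref{prop:recovering-configuration} really produce $\QQ$-rational endomorphisms whose scalar extensions are $\gamma^*$. Signs and the choice of identification $g_1$ must be tracked, and one must justify that kernels of polynomial relations match by comparing over $\QQ_\ell$, where injectivity of $\QQ\subset\QQ_\ell$ suffices.
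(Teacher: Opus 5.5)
Your proof is correct and follows the paper's route. You descend the labelled configuration via Proposition~\ref{prop:recovering-configuration}, write each $K_{\gamma,\QQ}$ as a graph over $K_{0,\QQ}$, and form $g_1^{-1}\circ g_\gamma$ to realize $\mathcal D^{\mathrm{op}}$ by $\QQ$-rational endomorphisms of a $2$-dimensional space. The only difference is the last step: you show that the generated subalgebra is $\mathcal D^{\mathrm{op}}$ and fills $M_2(\QQ)$, whereas the paper notes that a nonzero $\mathcal D^{\mathrm{op}}$-module must have $\QQ$-dimension divisible by $4$. Your central-simplicity dimension count over $\QQ_\ell$ is harmless but unnecessary, since faithfulness already follows from $\mathcal D^{\mathrm{op}}$ being a division algebra.
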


\begin{proof}
Suppose, to the contrary, that there exist a graded $\QQ$-algebra
$R_{\QQ}$ and an isomorphism
$\varphi:
R_{\QQ}\otimes_{\QQ}\QQ_\ell
\xrightarrow{\sim}
H^*(Z)$.
By Proposition~\ref{prop:recovering-configuration}, for each $\gamma\in I$, there is a
two-dimensional $\QQ$-subspace $K_{\gamma,\QQ}\subset R_{\QQ}^1$,
whose extensions to $\QQ_\ell$ are the subspaces $K_\gamma$.
Let $S_{\QQ}:=R_{\QQ}^1$ be the degree 1 piece.
Since $K_\infty$ and $K_0$ are complementary in $H^1(Z)$, the same is
true before extending scalars:
\[
S_{\QQ}
=
K_{\infty,\QQ}\oplus K_{0,\QQ}.
\]
Let $p_\infty:S_{\QQ}\to K_{\infty,\QQ}$ and 
$p_0:S_{\QQ}\to K_{0,\QQ}$
be the corresponding projections.
For $\gamma\in\{1,\alpha,\beta\}$, the restriction $p_0|_{K_{\gamma,\QQ}}:
K_{\gamma,\QQ}\to K_{0,\QQ}$
is an isomorphism.  Therefore, we have that $K_{\gamma,\QQ}$ is the graph of the
$\QQ$-linear map
\[
g_{\gamma,\QQ}
:=
p_\infty\circ
\left(p_0|_{K_{\gamma,\QQ}}\right)^{-1}
:
K_{0,\QQ}\longrightarrow K_{\infty,\QQ}.
\]
Each $g_{\gamma,\QQ}$ is an isomorphism.  
Now we set $V_{\QQ}:=K_{0,\QQ}$
and define
\[
T_{\gamma,\QQ}
:=
g_{1,\QQ}^{-1}\circ g_{\gamma,\QQ}
\in\End_{\QQ}(V_{\QQ}),
\qquad
\gamma\in\{\alpha,\beta\}.
\]

After extending scalars to $\QQ_\ell$, the subspace $K_\gamma$ is the
graph of $-\gamma^*$, while $K_1$ is the graph of $-\operatorname{id}$.
Consequently,
\[
T_{\gamma,\QQ}\otimes_{\QQ}\QQ_\ell
=
\gamma^*
\qquad
\text{for }\gamma\in\{\alpha,\beta\}.
\]

The maps $T_{\alpha,\QQ}$ and $T_{\beta,\QQ}$ therefore satisfy all the
$\QQ$-algebra relations satisfied by $\alpha^*$ and $\beta^*$.  Indeed,
any such relation holds after extending scalars to $\QQ_\ell$, and
extension of scalars from $\QQ$ to $\QQ_\ell$ is faithful.  Since
$\alpha$ and $\beta$ generate $\mathcal D$, we obtain a unital
homomorphism
\[
\rho:
\mathcal D^{\mathrm{op}}
\longrightarrow
\End_{\QQ}(V_{\QQ}).
\]
This is absurd, because $\mathcal D^{\mathrm{op}}$ is a 4-dimensional division algebra over $\QQ$, so that a non-zero orbit must have dimension 4, but $\dim_{\QQ}(V_{\QQ})=2$.
\end{proof}

Proposition \ref{prop:Z-has-no-Q-form} and Lemma \ref{lem:no-Q-form-obstructs-lifting} entail that 

\begin{corollary}
$Z$ does not admit a characteristic 0 lifting.    
\end{corollary}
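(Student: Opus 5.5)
The corollary follows by combining two results already in hand. First, $Z$ is a smooth projective variety over $k=\overline{\FF}_p$. It is the blow-up of the smooth projective threefold $Y=(E\times E)\times\PP^1_k$ along the smooth closed subscheme $C$, which is a disjoint union of smooth curves $C_{\gamma,t}\cong\Gamma_\gamma\cong E$. A blow-up of a smooth variety along a smooth center is smooth, and it is projective over $Y$, hence projective. In particular $Z/\overline{\FF}_p$ is smooth and proper, so it satisfies the hypotheses of Lemma~\ref{lem:no-Q-form-obstructs-lifting}.

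Second, Proposition~\ref{prop:Z-has-no-Q-form} says that the graded $\QQ_\ell$-algebra $H^*(Z)=H^*_{\et}(Z,\QQ_\ell)$ admits no $\QQ$-form. Feeding this into Lemma~\ref{lem:no-Q-form-obstructs-lifting} gives the conclusion directly.

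Spelled out as a contradiction: suppose $\mathcal Z\to\Spec R$ is a characteristic-zero lifting. The lemma shows $\mathcal Z\to\Spec R$ is smooth. The specialization isomorphism is compatible with cup products, so $H^*_{\et}(Z,\QQ_\ell)\cong H^*_{\et}(\mathcal Z_{\overline K},\QQ_\ell)$ as graded rings. Passing to a complex model $Y/\CC$ and applying the comparison theorem makes $H^*_{\mathrm{sing}}(Y(\CC),\QQ)$ a $\QQ$-form of $H^*(Z)$, which contradicts the proposition.

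There is no real obstacle. The one point to check is that the notion of $\QQ$-form in Lemma~\ref{lem:no-Q-form-obstructs-lifting} is the same as in Proposition~\ref{prop:Z-has-no-Q-form}: graded-algebra isomorphisms, with Tate twists omitted. Both use the definition from the introduction, so they agree.
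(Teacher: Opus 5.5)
Your proposal is correct and is exactly the paper's argument: the corollary follows by feeding Proposition~\ref{prop:Z-has-no-Q-form} into Lemma~\ref{lem:no-Q-form-obstructs-lifting}. The only extra work is your check that $Z$ is smooth and projective (a blow-up of a smooth projective variety along a smooth center), which the paper leaves implicit.
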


\section{The simply connected variety $X$}
\label{subsec:simply-connected-variety}

\subsection{The final blow-up}
\label{subsubsec:final-blow-up}\;

By \cite[Chapter~II, \S5.4, Theorem~2.25]{Shafarevich-BAG1},
the smooth projective threefold $Z$ admits a closed immersion into
$\PP^7_k$.  Composing with a linear closed immersion
$\PP^7_k\hookrightarrow\PP^9_k$, fix a closed immersion
$i:Z\hookrightarrow\PP^9_k$, and set
\[
X:=\Bl_Z(\PP^9_k).
\]
Birational invariance of the \'etale fundamental group
\cite[Expos\'e~X, Corollaire~3.4]{sga1} entails that $X$ is simply-connected.
We will show that the abstract graded algebra
$H^*(X,\QQ_\ell)$ admits no $\QQ$-form.

Let $\pi:X\to\PP^9_k$ be the blow-up morphism, let
$j:F\hookrightarrow X$ be the exceptional divisor, and let $g:F\to Z$
be its projection.  We consider the two classes
\[
h:=\pi^*c_1\bigl(\OO_{\PP^9_k}(1)\bigr),
\qquad
e:=j_*(1_F)
\]
in $H^2(X)$, where $1_F\in H^0(F)$ is the unit.  Thus $h$ is the
hyperplane class coming from the ambient projective space, while $e$ is
the class of the exceptional divisor.

For $1\leq a\leq 5$, define
\[
\iota_a:H^r(Z)\longrightarrow H^{r+2a}(X),
\qquad
\iota_a(\alpha)
=
j_*\bigl(g^*\alpha\cdot(j^*e)^{a-1}\bigr).
\]
The normal bundle of $F$ in $X$ is $\OO_F(-1)$.  Hence the
self-intersection formula gives
\begin{equation}
\label{eqn: self-int}
    j^*e
=
c_1(N_{F/X})
=
-c_1(\OO_F(1)),
\end{equation}
and therefore
\[
\iota_a(\alpha)
=
(-1)^{a-1}
j_*\bigl(g^*\alpha\cdot c_1(\OO_F(1))^{a-1}\bigr).
\]
The map on the right, without the factor $(-1)^{a-1}$, is the
$a$-th exceptional summand map in the blow-up isomorphism
\eqref{eqn: blowup for}.  Consequently, each $\iota_a$ is injective.

Since $Z$ has codimension $6$ in $\PP^9_k$, the blow-up formula
\eqref{eqn: blowup for} gives, for every $m$, a direct-sum decomposition
\begin{equation}
\label{eqn:blowup-for-X}
H^m(X)
=
\pi^*H^m(\PP^9_k)
\oplus
\bigoplus_{a=1}^{5}
\iota_a\bigl(H^{m-2a}(Z)\bigr).
\end{equation}

In low degrees, \eqref{eqn:blowup-for-X} gives
\[
H^1(X)=0,\qquad
H^2(X)=\QQ_\ell h\oplus\QQ_\ell e,\qquad
H^3(X)=\iota_1\bigl(H^1(Z)\bigr).
\]
We will first
show that the two lines $\QQ_\ell h$ and $\QQ_\ell e$ can be recognized
from the abstract graded algebra $H^*(X)$.

\subsection{Recovering $H^*(Z)$ from $H^*(X)$}
\label{subsubsec:recovering-Z-from-X}\;

The crucial step is to recognize the lines $\QQ_\ell h$ and
$\QQ_\ell e$ intrinsically.  Once these lines are known, recovering
$H^*(Z)$ is a formal consequence of the blow-up formula.

We begin with two useful formulas:

\begin{lemma}[Multiplication in the exceptional summands]
\label{lem:multiplication-exceptional-summands}
For $\delta\in H^*(\PP^9_k)$ and $\alpha,\beta\in H^*(Z)$, we have
\begin{equation}
\label{eqn:ambient-exceptional-product}
\pi^*\delta\cdot\iota_a(\alpha)
=
\iota_a\bigl(i^*\delta\cdot\alpha\bigr).
\end{equation}
Moreover, whenever $a+b\leq 5$,
\begin{equation}
\label{eqn:exceptional-product}
\iota_a(\alpha)\cdot\iota_b(\beta)
=
\iota_{a+b}(\alpha\cdot\beta).
\end{equation}
In particular, we have that $\iota_a(1_Z)=e^a$ for $1\leq a\leq 5$, where $1_Z\in H^0(Z)$ is the unit.
\end{lemma}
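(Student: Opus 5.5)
The first formula \eqref{eqn:ambient-exceptional-product} is the projection formula. We have $\pi\circ j = i\circ g$, so $j^*\pi^*\delta = g^*i^*\delta$. Since $j_*$ is $H^*(X)$-linear via $j^*$, this gives
\[
\pi^*\delta\cdot j_*\bigl(g^*\alpha\cdot(j^*e)^{a-1}\bigr)
=j_*\bigl(g^*(i^*\delta)\cdot g^*\alpha\cdot(j^*e)^{a-1}\bigr).
\]
The right-hand side equals $\iota_a(i^*\delta\cdot\alpha)$ because $g^*$ is a ring map. The only points to check are sign conventions: $\pi^*\delta$ has even degree, so graded commutativity causes no issue.

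For \eqref{eqn:exceptional-product}, I will use the self-intersection identity $j^*j_*(y)=y\cdot c_1(N_{F/X})=y\cdot j^*e$ for $y\in H^*(F)$, which is the same input as in \eqref{eqn: self-int}. Applying the projection formula to $\iota_b(\beta)=j_*(y)$ with $y=g^*\beta\cdot(j^*e)^{b-1}$ gives
\[
\iota_a(\alpha)\cdot j_*(y)=j_*\bigl(j^*\iota_a(\alpha)\cdot y\bigr).
\]
Then
\[
j^*\iota_a(\alpha)=j^*j_*\bigl(g^*\alpha\cdot(j^*e)^{a-1}\bigr)=g^*\alpha\cdot(j^*e)^{a}.
\]
Multiplying the two factors yields $j_*\bigl(g^*(\alpha\beta)\cdot(j^*e)^{a+b-1}\bigr)=\iota_{a+b}(\alpha\beta)$. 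The classes $j^*e$ have even degree, so reordering introduces no signs beyond those already present in $\alpha\cdot\beta$. The order of the factors in the projection formula, left versus right module structure, needs a moment's care, but since $e$ is even no sign appears.

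The hypothesis $a+b\leq 5$ is only needed so that $\iota_{a+b}$ is one of the maps we have defined. The computation $j_*(g^*(\alpha\beta)(j^*e)^{a+b-1})$ is valid for any $a,b$. For $a+b\geq 6$ one would instead have to rewrite the result using the Grothendieck relation for $c_1(\OO_F(1))$, but that case is not claimed.

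For the final assertion $\iota_a(1_Z)=e^a$, I will argue by induction on $a$. The base case is $\iota_1(1_Z)=j_*(1_F)=e$ by definition. For the inductive step, \eqref{eqn:exceptional-product} with $b=1$ gives $\iota_a(1_Z)\cdot e=\iota_a(1_Z)\cdot\iota_1(1_Z)=\iota_{a+1}(1_Z)$ whenever $a+1\leq5$.

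The main obstacle is only the self-intersection formula $j^*j_*y=y\cdot c_1(N_{F/X})$ in étale cohomology. It holds for the Gysin map of a smooth Cartier divisor: $j^*j_*1_F$ is the restriction of the cycle class of $F$, which equals $c_1(\OO_X(F)|_F)=c_1(N_{F/X})$. The general case then follows from this via the projection formula on $F$. I will cite this standard fact, from SGA 4½ on the cycle class map, rather than reprove it.
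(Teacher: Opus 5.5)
Your proposal is correct and is essentially the paper's proof. The first identity comes from the projection formula together with $j^*\pi^*=g^*i^*$. The second comes from the projection formula combined with the self-intersection formula $j^*j_*(y)=y\cdot j^*e$. You pull $\iota_a(\alpha)$ back to $F$ where the paper pulls back $\iota_b(\beta)$, which makes no difference, and your induction for $\iota_a(1_Z)=e^a$ just spells out what the paper leaves implicit.

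One small correction to your side remark on the self-intersection formula. The ordinary projection formula deduces $j^*j_*(y)=y\cdot j^*j_*(1_F)$ from the case $y=1_F$ only for $y\in j^*H^*(X)$. That does not cover the classes $g^*\alpha\cdot(j^*e)^{a-1}$ you actually need; for example, $H^1(X)=0$ but $H^1(F)\cong H^1(Z)\neq 0$. The general case instead comes from the $H^*(F)$-linearity of $j^*j_*$ furnished by the purity isomorphism. Since you cite the standard self-intersection formula anyway, the argument is unaffected.
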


\begin{proof}
The projection formula gives, for every $x\in H^*(F)$,
\[
\pi^*\delta\cdot j_*(x)
=
j_*\bigl(j^*\pi^*\delta\cdot x\bigr).
\]
Since $j^*\pi^*\delta=g^*i^*\delta$, taking
$x=g^*\alpha\cdot(j^*e)^{a-1}$ proves
\eqref{eqn:ambient-exceptional-product}.

For the second identity, the projection formula and the
self-intersection formula give
\[
\begin{aligned}
\iota_a(\alpha)\cdot\iota_b(\beta)
&=
j_*\bigl(g^*\alpha\cdot(j^*e)^{a-1}\bigr)
\cdot
j_*\bigl(g^*\beta\cdot(j^*e)^{b-1}\bigr)
\\
&=
j_*\Bigl(
g^*\alpha\cdot(j^*e)^{a-1}
\cdot
j^*j_*\bigl(g^*\beta\cdot(j^*e)^{b-1}\bigr)
\Bigr)
\\
&=
j_*\Bigl(
g^*\alpha\cdot(j^*e)^{a-1}
\cdot
c_1(N_{F/X})
\cdot
g^*\beta\cdot(j^*e)^{b-1}
\Bigr)
\\
&=
j_*\Bigl(
g^*\alpha\cdot(j^*e)^{a-1}
\cdot
j^*e
\cdot
g^*\beta\cdot(j^*e)^{b-1}
\Bigr)
\\
&=
j_*\bigl(
g^*(\alpha\cdot\beta)\cdot(j^*e)^{a+b-1}
\bigr)
\\
&=
\iota_{a+b}(\alpha\cdot\beta).
\end{aligned}
\]
Here the second equality is the projection formula, the third is the
self-intersection formula, and the fourth uses
\eqref{eqn: self-int}.
\end{proof}

\begin{lemma}[The two distinguished lines]
\label{lem:distinguished-lines}
The lines $\QQ_\ell h$ and $\QQ_\ell e$ are determined intrinsically by
the graded algebra $H^*(X)$.  More precisely,
\begin{equation}
\label{eqn:hyperplane-line}
\QQ_\ell h
=
\left\{
x\in H^2(X):x^3\cdot H^3(X)=0
\right\}.
\end{equation}
Once this line has been recovered, for any nonzero
$\widetilde h\in\QQ_\ell h$,
\begin{equation}
\label{eqn:exceptional-line}
\QQ_\ell e
=
\ker\left(
\widetilde h^{\,4}\cdot-:
H^2(X)\longrightarrow H^{10}(X)
\right).
\end{equation}
Both descriptions are compatible with extension of the coefficient
field.
\end{lemma}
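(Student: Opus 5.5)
The plan is to write a general class as $x=a h+b e$ with $a,b\in\QQ_\ell$, which is possible because $H^2(X)=\QQ_\ell h\oplus\QQ_\ell e$. We then compute the relevant products using Lemma~\ref{lem:multiplication-exceptional-summands} together with the direct-sum decomposition \eqref{eqn:blowup-for-X}. Write $\eta:=i^*c_1(\OO_{\PP^9_k}(1))\in H^2(Z)$. By Lemma~\ref{lem:multiplication-exceptional-summands}, $h\cdot\iota_c(\alpha)=\iota_c(\eta\alpha)$, and since $e=\iota_1(1_Z)$, also $e\cdot\iota_c(\alpha)=\iota_{c+1}(\alpha)$ as long as $c+1\le 5$. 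The key dimensional input is that $Z$ is a threefold, so $H^m(Z)=0$ for $m>6$.

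\emph{First description.} Recall $H^3(X)=\iota_1(S)$ with $S=H^1(Z)$, which is $4$-dimensional and in particular nonzero. For $s\in S$, expand $x^3\iota_1(s)$ binomially:
\[
x^3\cdot\iota_1(s)=b^3\iota_4(s)+3ab^2\iota_3(\eta s)+3a^2b\,\iota_2(\eta^2 s)+a^3\iota_1(\eta^3 s).
\]
The last term vanishes because $\eta^3 s\in H^7(Z)=0$. In particular $h^3\cdot H^3(X)=0$, so the line $\QQ_\ell h$ is contained in the right-hand side of \eqref{eqn:hyperplane-line}. Conversely, the remaining three terms lie in distinct summands of \eqref{eqn:blowup-for-X}. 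Since $\iota_4$ is injective, the $\iota_4$-component $b^3\iota_4(s)$ is nonzero whenever $b\neq0$ and $s\neq0$. Hence $x^3\cdot H^3(X)=0$ forces $b=0$, which proves \eqref{eqn:hyperplane-line}.

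\emph{Second description.} For $\widetilde h=c h$ with $c\neq0$ we compute
\[
\widetilde h^{\,4}(a h+b e)=c^4\bigl(a\,\pi^*(H^{10}\text{-generator})+b\,\iota_1(\eta^4)\bigr),
\]
where the first term is $a$ times the pullback of the generator $c_1(\OO_{\PP^9_k}(1))^5$. We have $\eta^4\in H^8(Z)=0$, while $\pi^*\bigl(c_1(\OO_{\PP^9_k}(1))^5\bigr)\neq0$ because $\pi^*$ is injective by \eqref{eqn:blowup-for-X}. So the kernel is exactly $\{a=0\}=\QQ_\ell e$, which proves \eqref{eqn:exceptional-line}.

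\emph{Compatibility with extension of scalars.} I expect this to be the only delicate point. Let $\varphi:R_F\otimes_F\QQ_\ell\xrightarrow{\sim}H^*(X)$. The condition $x^3\cdot R^3_F=0$ is a system of cubic polynomial equations on $R^2_F$ with coefficients in $F$. Their common zero locus $W$, viewed as a closed subscheme of affine space over $F$, has $\QQ_\ell$-points equal to the line $\varphi^{-1}(\QQ_\ell h)$. Because $F\subset\QQ_\ell$ has characteristic $0$, forming the reduced subscheme commutes with base change. Hence $W_{\mathrm{red}}$ is an $F$-form of this line, i.e.\ an $F$-rational line $L_F\subset R^2_F$ with $L_F\otimes_F\QQ_\ell\mapsto\QQ_\ell h$.

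Choosing a nonzero $\widetilde h\in L_F$, the map $\widetilde h^{\,4}\cdot-$ is $F$-linear. Its kernel is therefore an $F$-line whose scalar extension is $\QQ_\ell e$, because taking kernels commutes with flat base change. Alternatively, one can avoid the scheme-theoretic step for the first line by noting that the cubic form $x\mapsto x^3$ restricted along the $\iota_4$-component is $b^3$, which vanishes to order exactly $3$; but the reduced-structure argument above is the route I would take.
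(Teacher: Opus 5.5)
Your proof is correct and is essentially the paper's argument. It uses the same expansion of $x^3\cdot\iota_1(s)$ with the nonvanishing $b^3\iota_4(s)$ component, the same computation $h^4e=\iota_1(\eta^4)=0$ against $h^5\neq0$, and the same reduced-zero-locus descent step, which you spell out more explicitly than the paper does. The one thing to tighten is that the reduced structure of the base-changed locus is not determined by its $\QQ_\ell$-points alone; what pins it down is that the ideal of cubic equations lies between $(b^3)$ and $(b)$, which your expansion already shows.
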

\begin{proof}
By \eqref{eqn:blowup-for-X}, every class in $H^2(X)$ can be written
uniquely as $ah+be$.  We first show that the line $\QQ_\ell h$ is
contained in the right-hand side of \eqref{eqn:hyperplane-line}.  For
$u\in H^1(Z)$, equation \eqref{eqn:ambient-exceptional-product} gives
\[
h^3\cdot\iota_1(u)
=
\iota_1\left(
i^*c_1\bigl(\OO_{\PP^9_k}(1)\bigr)^3\cdot u
\right)
=
0,
\]
because the class inside $\iota_1$ belongs to $H^7(Z)$ and $Z$ is a
threefold.  Since $H^3(X)=\iota_1(H^1(Z))$, it follows that
$h^3\cdot H^3(X)=0$.

Conversely, suppose that $x=ah+be$ with $b\neq0$, and choose
$0\neq u\in H^1(Z)$.  
Expanding $x^3=(ah+be)^3$ and using
\eqref{eqn:exceptional-product}, we obtain
\[
\begin{aligned}
x^3\cdot\iota_1(u)
&=
a^3h^3\cdot\iota_1(u)
+3a^2b\,h^2e\cdot\iota_1(u)
+3ab^2\,he^2\cdot\iota_1(u)
+b^3e^3\cdot\iota_1(u)
\\
&=
a^3h^3\cdot\iota_1(u)
+3a^2b\,h^2\cdot\iota_2(u)
+3ab^2\,h\cdot\iota_3(u)
+b^3\iota_4(u).
\end{aligned}
\]
By \eqref{eqn:ambient-exceptional-product}, multiplication by $h$
preserves the index of an exceptional summand.  Thus the first three
terms belong respectively to the first, second, and third exceptional
summands in \eqref{eqn:blowup-for-X}, whereas the last term belongs to
$\iota_4(H^1(Z))$.  
Since $\iota_4$ is injective,
$b^3\iota_4(u)\neq0$.  Thus $x^3$ does not annihilate $H^3(X)$, proving
\eqref{eqn:hyperplane-line}.

It remains to recover the exceptional line.  Multiplying
$\widetilde h$ by a nonzero scalar does not change the kernel in
\eqref{eqn:exceptional-line}, so we may take $\widetilde h=h$.  Equation
\eqref{eqn:ambient-exceptional-product} gives
\[
h^4e
=
\iota_1\left(
i^*c_1\bigl(\OO_{\PP^9_k}(1)\bigr)^4
\right)
=
0,
\]
because $H^8(Z)=0$.  On the other hand, $h^5\neq0$, since it is the
pullback of the fifth power of the hyperplane class on $\PP^9_k$.
Therefore, we have that $h^4(ah+be)=ah^5$, whose kernel is $\QQ_\ell e$.  This proves
\eqref{eqn:exceptional-line}.

The condition in \eqref{eqn:hyperplane-line} is given by homogeneous
polynomial equations determined by multiplication in $H^*(X)$, and its
reduced common zero locus is the line $\QQ_\ell h$.  Once this line is
known, \eqref{eqn:exceptional-line} describes $\QQ_\ell e$ as the
kernel of a linear map determined by multiplication.  These
constructions are intrinsic to the graded algebra and commute with
extension of the coefficient field.
\end{proof}

\begin{proposition}[Reconstruction of the center]
\label{prop:recovering-Z-from-X}
The graded algebra $H^*(X)$ determines $H^*(Z)$ up to isomorphism, in
a manner compatible with extension of the coefficient field.  In
particular, if $H^*(X)$ admits a $\QQ$-form, then $H^*(Z)$ also admits
a $\QQ$-form.
\end{proposition}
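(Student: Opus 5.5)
We reconstruct $H^*(Z)$ from $H^*(X)$ using only the two lines singled out in Lemma~\ref{lem:distinguished-lines}. Fix nonzero $\widetilde e\in\QQ_\ell e$; since we only know the line, $\widetilde e=ce$ for some unknown $c\neq 0$. We define a graded vector space $A^r$ as a subquotient of $H^{r+2}(X)$ and equip it with a product.

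\textbf{Step 1 (the graded pieces).} Set
\[
A^r:=\widetilde e\cdot H^r(X)\subset H^{r+2}(X)
\quad\text{modulo}\quad \widetilde e^{\,2}\cdot H^{r-2}(X)\ (\text{or a suitable filtration}).
\]
A cleaner route is to use $\iota_1(H^r(Z))=\ker\bigl(\widetilde h^{\,?}\bigr)$-type descriptions. Concretely, I would try to prove
\[
\iota_1\bigl(H^r(Z)\bigr)\oplus\pi^*H^{r+2}(\PP^9_k)=\widetilde e\cdot H^{r}(X)+\widetilde h\cdot H^{r}(X)
\]
for $r\le 6$. From \eqref{eqn:ambient-exceptional-product} and \eqref{eqn:exceptional-product}, the products $e\cdot\iota_a(\alpha)=\iota_{a+1}(\alpha)$ land in the higher summands for $a+1\le 5$. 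So this equality is \emph{not} what one wants; instead I would isolate $\iota_1$ by a degree trick.

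\textbf{Step 2 (preferred route).} Use the quotient
\[
Q^{r}:=H^{r+2}(X)\big/\bigl(\widetilde h\cdot H^{r}(X)+\widetilde e^{\,2}\cdot H^{r-2}(X)\bigr)\cdot(\text{restricted to } \widetilde e\cdot H^r(X)),
\]
that is, $A^r:=\widetilde e H^r(X)/(\widetilde eH^r(X)\cap(\widetilde e^{2}H^{r-2}(X)+\widetilde h H^{r}(X)))$. One could also just use the map $H^r(Z)\to H^{r+2}(X)$ modulo $\bigoplus_{a\ge2}\iota_a+\pi^*$. The cleanest intrinsic formula I would aim for is
\[
H^{r+2}(X)\big/\Bigl(\widetilde h\cdot H^{r}(X)+\widetilde e\cdot\widetilde e\cdot H^{r-2}(X)+\dots\Bigr).
\]
Using \eqref{eqn:blowup-for-X} and Lemma~\ref{lem:multiplication-exceptional-summands}, I would show this quotient is $\iota_1(H^r(Z))$ modulo $\iota_1(i^*h\cdot H^{r-2}(Z))$. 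That is the wrong object, since the ideal generated by the hyperplane class is lost. So the honest plan is the following.

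\textbf{Step 3 (actual construction).} Filter by powers of $\widetilde e$. Let $J^{m}:=\widetilde e\cdot H^{m-2}(X)$, which by Lemma~\ref{lem:multiplication-exceptional-summands} equals
\[
\bigoplus_{a\ge1}\iota_a\bigl(H^{m-2a}(Z)\bigr)
\]
plus possibly $\iota_1$ of $i^*H^*(\PP^9)$-multiples. These are already inside $\iota_1(H(Z))$, because $e\cdot\pi^*\delta=\iota_1(i^*\delta)$ and $e\cdot\iota_a=\iota_{a+1}$, except that $e\cdot\iota_5(\alpha)$ needs the Chern class relation $\sum c_i\xi^{6-i}=0$. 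In low degree $m-2\le 2\cdot 5$, with a top term computable, the relevant $\iota_5$ contributions lie in degrees $\ge 10$. Then define $A^r:=J^{r+2}/(\widetilde e\cdot J^{r+2})$. Since $e\cdot\iota_a(\alpha)=\iota_{a+1}(\alpha)$ for $a\le4$ (the relation enters only at $a=5$), this gives $A^r\cong\iota_1(H^r(Z))\cong H^r(Z)$ for all $0\le r\le 6$. The product is $(\widetilde e x)\star(\widetilde e y):=\widetilde e\,x y$. Equivalently, $\iota_1(\alpha)\iota_1(\beta)=\iota_2(\alpha\beta)=e\,\iota_1(\alpha\beta)$, so one sets
\[
\bar u\star\bar v:=\widetilde e^{-1}(uv)\bmod \widetilde e J,
\]
where ``$\widetilde e^{-1}$'' means choosing the unique preimage under the injection $\cdot\widetilde e:\iota_1(H(Z))\bmod(\dots)\to\iota_2$. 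Scaling $\widetilde e$ by $c$ rescales the product by $c$, giving an isomorphic algebra via $\alpha\mapsto c^{\deg}$-twist. All steps are kernels, images and quotients of maps defined by multiplication, so they commute with extension of scalars. A $\QQ$-form of $H^*(X)$, via Lemma~\ref{lem:distinguished-lines} (whose lines are defined over $\QQ$), therefore yields a $\QQ$-form of $H^*(Z)$.

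\textbf{Main obstacle.} The key difficulty is making $A^r\cong H^r(Z)$ precise. This requires verifying that $\widetilde e\cdot H^r(X)$ modulo $\widetilde e^{2}H^{r-2}(X)$ is exactly $\iota_1(H^r(Z))$, and that the division by $\widetilde e$ in the product is well defined. Both rest on injectivity of $\iota_2$ together with directness of \eqref{eqn:blowup-for-X}, and on controlling the $\iota_5\cdot e$ relation near the top degree.
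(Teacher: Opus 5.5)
Step 3 has a genuine gap: $J^{r+2}=\widetilde e\cdot H^r(X)$ does not contain the first exceptional layer $\iota_1(H^r(Z))$. By \eqref{eqn:blowup-for-X} and Lemma~\ref{lem:multiplication-exceptional-summands}, $e\cdot\pi^*\delta=\iota_1(i^*\delta)$ and $e\cdot\iota_a(\alpha)=\iota_{a+1}(\alpha)$. So the $\iota_1$-component of $e\cdot H^r(X)$ is only $\iota_1\bigl(i^*H^r(\PP^9_k)\bigr)$, and a class $\iota_1(\alpha)$ with $\alpha$ not restricted from $\PP^9_k$ is not divisible by $e$.

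Concretely, $H^1(X)=0$ forces $J^3=0$, while $H^1(Z)\cong V\oplus V$ is $4$-dimensional. That is exactly the degree carrying the quaternionic obstruction. For $r=2$ you would likewise get a line instead of $H^2(Z)$. So your claim that $\widetilde eH^r(X)$ modulo $\widetilde e^{2}H^{r-2}(X)$ equals $\iota_1(H^r(Z))$ is false, not merely unverified. Your $A^*$ recovers only the image of $H^*(\PP^9_k)\to H^*(Z)$, which is trivially defined over $\QQ$. (Also, $J^{r+2}/\widetilde e J^{r+2}$ compares subspaces of different degrees.)

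What is missing is a way to cut out the \emph{whole} exceptional part of $H^m(X)$ by a kernel rather than an image. This is where the hyperplane line from Lemma~\ref{lem:distinguished-lines}, which you never use, comes in. Since $Z$ is a threefold, $i^*c_1(\OO_{\PP^9_k}(1))^4=0$. By \eqref{eqn:ambient-exceptional-product}, multiplication by $h^4$ therefore kills every $\iota_a$-summand, while it is injective on $\pi^*H^m(\PP^9_k)$ for $m\le 8$. Hence
$K^m:=\ker\bigl(h^4\colon H^m(X)\to H^{m+8}(X)\bigr)=\bigoplus_{a}\iota_a(H^{m-2a}(Z))$.

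Then $C^r:=K^{r+2}/eK^r\cong H^r(Z)$ via $\alpha\mapsto[\iota_1(\alpha)]$. You must divide by $eK^r$ and not by $eH^r(X)$; otherwise $e\cdot\pi^*\delta=\iota_1(i^*\delta)$, including $e=\iota_1(1_Z)$, is killed. The product is $C^r\otimes C^s\to eK^{r+s+2}/e^2K^{r+s}$, followed by the inverse of the isomorphism $C^{r+s}\to eK^{r+s+2}/e^2K^{r+s}$ induced by $e$. In this range only $e\cdot\iota_a$ with $a\le4$ occurs, so the Chern-class relation at $a=5$ never enters.

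Finally, replacing $e$ by $ce$ rescales $\star$ by $c^{-1}$, not $c$. This is absorbed by the uniform map $x\mapsto cx$. A $c^{\deg}$-twist cannot do it, since it is an automorphism of $(C^*,\star)$ and cannot absorb a constant rescaling of the product.
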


\begin{proof}
For $0\leq m\leq 8$, set
\[
K^m
:=
\ker\left(
h^4\cdot-:
H^m(X)\longrightarrow H^{m+8}(X)
\right).
\]
In this range, multiplication by $h^4$ is injective on the summand
pulled back from $\PP^9_k$, while it annihilates every exceptional
summand because $H^{>6}(Z)=0$ and \eqref{eqn:ambient-exceptional-product}.  
Therefore, we have that
\begin{equation}
\label{eqn:exceptional-kernel}
K^m
=
\bigoplus_{a=1}^{5}
\iota_a\bigl(H^{m-2a}(Z)\bigr).
\end{equation}

For $0\leq r\leq 6$, multiplication by $e$ maps $K^r$ into
$K^{r+2}$, so we may define
\[
C^r:=K^{r+2}/eK^r.
\]
By \eqref{eqn:exceptional-kernel}, we have
\[
K^{r+2}
=
\iota_1\bigl(H^r(Z)\bigr)
\oplus
\bigoplus_{a=2}^{5}
\iota_a\bigl(H^{r+2-2a}(Z)\bigr).
\]
On the other hand, \eqref{eqn:exceptional-product} gives
\[
e\cdot\iota_a(\alpha)=\iota_{a+1}(\alpha)
\]
for every relevant exceptional summand.  It follows that
\[
eK^r
=
\bigoplus_{a=2}^{5}
\iota_a\bigl(H^{r+2-2a}(Z)\bigr).
\]
Therefore, using that $\iota_a$ is injective, we have that
\begin{equation}
\label{eqn:first-exceptional-layer}
H^r(Z)\xrightarrow{\sim}C^r,
\qquad
\alpha\longmapsto[\iota_1(\alpha)]
\end{equation}
is an isomorphism.  Taken over all $0\leq r\leq6$, these maps identify
$H^*(Z)$ and $C^*$ as graded vector spaces.

Similarly, for $0\leq r\leq 6$, set
\[
D^r:=eK^{r+2}/e^2K^r.
\]
Multiplication by $e$ induces an isomorphism
\begin{equation}
\label{eqn:first-to-second-layer}
C^r\xrightarrow{\sim}D^r,
\qquad
[\iota_1(\alpha)]\longmapsto[\iota_2(\alpha)].
\end{equation}
Thus the first two exceptional layers are naturally identified by
multiplication with the exceptional class.

For $r+s\leq 6$, multiplication in $H^*(X)$ induces a well-defined map
\[
C^r\otimes C^s\longrightarrow D^{r+s},
\qquad
[x]\otimes[y]\longmapsto[xy].
\]
Indeed, the product of two classes in the first exceptional layer lies
in the second, while changing either representative changes the product
only by a class in the third exceptional layer.  Composing this map with
the inverse of \eqref{eqn:first-to-second-layer} defines a product
$\star$ on $C^*$.  By \eqref{eqn:exceptional-product},
\[
[\iota_1(\alpha)]\star[\iota_1(\beta)]
=
[\iota_1(\alpha\cdot\beta)].
\]
Consequently, \eqref{eqn:first-exceptional-layer} identifies
$(C^*,\star)$ with the graded algebra $H^*(Z)$.

All the operations used above—taking kernels, images, quotients, and
induced multiplication—are defined intrinsically from $H^*(X)$ and the
two lines recovered in Lemma~\ref{lem:distinguished-lines}, and they
commute with extension of the coefficient field.  The choice of a
nonzero generator of $\QQ_\ell h$ does not affect the construction.
Replacing $e$ by $ce$ rescales $\star$ by $c^{-1}$, but the resulting
graded algebra is isomorphic to the original one via multiplication by
$c$.  Thus the reconstructed algebra is determined by $H^*(X)$ up to
isomorphism.  In particular, applying the same construction to a
$\QQ$-form of $H^*(X)$ would produce a $\QQ$-form of $H^*(Z)$.
\end{proof}

Finally, Propositions~\ref{prop:recovering-Z-from-X} and \ref{prop:Z-has-no-Q-form} together entail the main Theorem \ref{thm:main-intro}.

\begin{remark}
    To get a higher-dimensional analogous unliftable variety, we can embed $Z$ into $\mathbb{P}^N$ for any $N\geq 9$ and take the blow up.
\end{remark}

\subsection*{Acknowledgements}\;

R.~H. is supported by 
NSF Grant 2247322.
S.~Z. is supported by an AMS--Simons travel grant. We are grateful to the comments from John Pardon.

\bibliographystyle{amsalpha}
\bibliography{ref}

\end{document}